\begin{document}

\setlength{\parindent}{0pt}

\theoremstyle{plain}

\newtheorem{Thm}{Theorem}
\newtheorem{Lem}[Thm]{Lemma}
\newtheorem{LemDef}[Thm]{Lemma/Definition}
\newtheorem{Cor}[Thm]{Corollary}
\newtheorem{Pro}[Thm]{Proposition}
\newtheorem*{nono-Thm}{Theorem}
\newtheorem*{ThmA}{Theorem A}
\newtheorem*{ThmB}{Theorem B}
\newtheorem*{ThmC}{Theorem C}
\newtheorem*{ThmD}{Theorem D}
\newtheorem*{nono-Lem}{Lemma}
\newtheorem*{nono-Cor}{Corollary}
\newtheorem*{nono-Pro}{Proposition}

\theoremstyle{definition}

\newtheorem{Def}[Thm]{Definition}
\newtheorem{Exe}[Thm]{Exercise}
\newtheorem{Exa}[Thm]{Example}
\newtheorem{Rem}[Thm]{Remark}
\newtheorem{Fac}[Thm]{Fact}
\newtheorem{Que}[Thm]{Question}
\newtheorem{Con}[Thm]{Conjecture}

\newcommand{\CA}{\mathcal A}
\newcommand{\CB}{\mathcal B}
\newcommand{\CC}{\mathcal C}
\newcommand{\CD}{\mathcal D}
\newcommand{\CE}{\mathcal E}
\newcommand{\CF}{\mathcal F}
\newcommand{\CG}{\mathcal G}
\newcommand{\CH}{\mathcal H}
\newcommand{\CI}{\mathcal I}
\newcommand{\CJ}{\mathcal J}
\newcommand{\CK}{\mathcal K}
\newcommand{\CL}{\mathcal L}
\newcommand{\CM}{\mathcal M}
\newcommand{\CN}{\mathcal N}
\newcommand{\CO}{\mathcal O}
\newcommand{\CQ}{\mathcal Q}
\newcommand{\CR}{\mathcal R}
\newcommand{\CS}{\mathcal S}
\newcommand{\CT}{\mathcal T}
\newcommand{\CU}{\mathcal U}
\newcommand{\CV}{\mathcal V}
\newcommand{\CW}{\mathcal W}
\newcommand{\CX}{\mathcal X}
\newcommand{\CY}{\mathcal Y}
\newcommand{\CZ}{\mathcal Z}

\newcommand{\MA}{\mathbb A}
\newcommand{\MB}{\mathbb B}
\newcommand{\MC}{\mathbb C}
\newcommand{\MD}{\mathbb D}
\newcommand{\ME}{\mathbb E}
\newcommand{\MF}{\mathbb F}
\newcommand{\MG}{\mathbb G}
\newcommand{\MH}{\mathbb H}
\newcommand{\MI}{\mathbb I}
\newcommand{\MJ}{\mathbb J}
\newcommand{\MK}{\mathbb K}
\newcommand{\ML}{\mathbb L}
\newcommand{\MM}{\mathbb M}
\newcommand{\MN}{\mathbb N}
\newcommand{\MO}{\mathbb O}
\newcommand{\MP}{\mathbb P}
\newcommand{\MQ}{\mathbb Q}
\newcommand{\Reals}{\mathbb R}
\newcommand{\MS}{\mathbb S}
\newcommand{\MU}{\mathbb U}
\newcommand{\MV}{\mathbb V}
\newcommand{\MW}{\mathbb W}
\newcommand{\MX}{\mathbb X}
\newcommand{\MY}{\mathbb Y}
\newcommand{\MZ}{\mathbb Z}

\newcommand{\StA}{\mathscr A}
\newcommand{\CP}{\mathbb C P^{\infty}}

\newcommand{\ab}{\text{ab}}
\newcommand{\im}{\text{im}}
\newcommand{\ch}{\text{ch}}
\newcommand{\rk}{\text{rk}}
\newcommand{\st}{\text{st}}
\newcommand{\Sq}{\text{Sq}}
\newcommand{\colim}{\text{colim}}
\newcommand{\hofib}{\text{hofib}}

\newcommand{\rR}{\rightarrow}
\newcommand{\RR}{\Rightarrow}
\newcommand{\LR}{\Leftrightarrow}
\newcommand{\id}{\text{id}}
\newcommand{\Hom}{\text{Hom}}
\newcommand{\Ext}{\text{Ext}}
\newcommand{\Tor}{\text{Tor}}
\newcommand{\pt}{\star}
\newcommand{\op}{\text{op}}
\newcommand{\ev}{\text{ev}}
\newcommand{\pr}{\text{pr}}
\newcommand{\even}{\text{even}}
\newcommand{\odd}{\text{odd}}
\newcommand{\const}{\text{const}}
\newcommand{\Gal}{\text{Gal}}
\newcommand{\maps}{\text{maps}}
\newcommand{\map}{\text{map}}
\newcommand{\mog}{\text{mog}}
\newcommand{\Aut}{\text{Aut}}
\newcommand{\Conf}{\text{Conf}}
\newcommand{\Sing}{\text{Sing}}
\newcommand{\height}{\text{height}}
\newcommand{\Fr}{\text{Fr}}
\newcommand{\GL}{\text{GL}}
\newcommand{\St}{\text{St}}
\newcommand{\Sym}{\text{Sym}}
\newcommand{\TOP}{\text{TOP}}
\newcommand{\APL}{\left(A_\text{PL}\right)}
\newcommand{\CPL}{\left(C_\text{PL}\right)}
\newcommand{\Cpl}{C_\text{PL}}
\newcommand{\Apl}{A_\text{PL}}
\newcommand{\CDGA}{\text{CDGA}}
\newcommand{\character}{\text{char}}
\newcommand{\Fadd}{F_{\text{add}}}
\newcommand{\Fell}{F_{\text{ell}}}
\newcommand{\Fmult}{F_{\text{mult}}}
\newcommand{\Funiv}{F_{\text{univ}}}
\newcommand{\MT}{\text{\bf MT}}
\newcommand{\hAut}{\text{hAut}}
\newcommand{\Diff}{\text{Diff}}
\newcommand{\DiffPlus}{\text{Diff}^{\ +}}
\newcommand{\Gr}{\text{Gr}}
\newcommand{\HE}{H_{S^1}}
\newcommand{\EE}{H_{S^1,\text{loc}}}
\newcommand{\Spin}{\text{Spin}}
\newcommand{\MSpin}{\text{\bf MSpin}}
\newcommand{\be}{b_{\text{even}}}
\newcommand{\bo}{b_{\text{odd}}}

\newcommand{\commentJR}[1]{\textcolor{blue}{#1}}

\title[Tautological classes and smooth bundles over BSU(2)]{Tautological classes and smooth bundles \\ over BSU(2)}

\author{Jens Reinhold}
\address{Department of Mathematics, Stanford University, Stanford, California 94305}
\email{\texttt{jreinh@stanford.edu}}
\date{}
\urladdr{\url{https://sites.google.com/stanford.edu/jreinhold}}
\thanks{2010 Mathematics Subject Classification: 55R37, 57S05, 57S25.}
\thanks{Jens Reinhold is supported by the E. K. Potter Stanford Graduate Fellowship}

\begin{abstract}
For a Lie group $G$ and a smooth manifold $W$, we study the difference between smooth actions of $G$ on $W$ and fiber bundles over the classifying space of $G$ with fiber $W$ and structure group $\Diff(W)$. In particular, we exhibit smooth manifold bundles over $BSU(2)$ that are not induced by an action. The main tool for reaching this goal is a technical result that gives a constraint for the values of tautological classes of the fiber bundle associated to a group action. 
\end{abstract}

\setcounter{secnumdepth}{2}

\maketitle{}

\section{{Introduction and statement of results}} 
\normalsize
Throughout this work, $G$ stands for a Lie group and $W$ denotes a closed connected smooth oriented manifold of dimension $2n$. If $G$ acts smoothly on $W$, this gives rise to a map $BG \to B\Diff(W)$, where $\Diff(W)$ stands for the group of orientation-preserving diffeomorphisms of $W$ endowed with the Whitney $C^{\infty}$-topology. Such maps classify fiber bundles with fiber $W$ and structure group $\Diff(W)$, which will henceforth be called \emph{smooth $W$-bundles}. Let us denote the set of all smooth actions of $G$ on $W$ by $\Hom (G,\Diff(W))$. For a paracompact model of the classifying space $BG$, the above assignment can be seen as a map
\begin{equation*}\label{Bmap}
 \Hom (G,\Diff(W)) / \Diff(W) \xrightarrow{B}  \{\text{Smooth} \ W\text{-bundles over} \ BG\} /\! \sim,
\end{equation*}
where $\sim$ stands for the equivalence relation of bundle isomorphism. Let us now consider the following dichotomy.

\begin{Def} A smooth $W$-bundle over $BG$ is called \emph{kinetic} if it lies in the image of $B$, and \emph{non-kinetic} otherwise. \end{Def}

This leads to the following question.

\begin{Que}\label{question} For which pairs $(G,W)$ do there exist non-kinetic smooth $W$-bundles over $BG$?
\end{Que}

In the case that $G$ is a discrete group, Question \ref{question} relates to an open problem about flat manifold bundles. Indeed, for discrete $G$, a smooth $W$-bundle $E \to BG$ is kinetic if and only if it admits a flat structure: both properties are equivalent to the condition that the classifying map $BG \to B\Diff(W)$ admits a lift along $B\Diff(W)^{\delta} \to B\Diff(W)$, where $B\Diff(W)^{\delta}$ is the classifying space of the group $\Diff(W)$ with the discrete topology. Thus, in the case that $G$ is discrete, a non-kinetic bundle $BG \to B\Diff(W)$ is one that does not admit any flat structure. For instance, suppose that $G = \pi_1(\Sigma_a)$ and $W = \Sigma_b$, where $\Sigma_g$ is a closed connected surface of genus $g$. Establishing the existence of a non-kinetic bundle in this case is equivalent to showing that there is a surface bundle over a surface that does not admit a flat structure. Whether such bundles exist is an open problem, see e.g.~the question at the end of the introduction of \cite{Church}.

In this work, we instead focus on the case where $G$ is a compact connected Lie group. For most purposes, we in fact specialize to the case $G = SU(2)$. We use characteristic classes of manifold bundles, i.e., cohomology classes $\alpha \in H^{\ast}(B\Diff(W);\MQ)$, as a tool to answer question \ref{question} in infinitely many cases. For any such $\alpha$ and any smooth action of $G$ on $W$, we can pull $\alpha$ back along the induced map $s\colon BG \to B\Diff(W)$ to get a cohomology class $s^{\ast} \alpha \in H^{\ast}(BG;\MQ)$. This process can be seen as a map
\begin{equation*}
D^{G,W}_{\alpha} \colon \Hom(G,\Diff(W))/\Diff(W) \to H^{\ast}(BG;\MQ), 
\end{equation*}
and one of the results of this paper is a statement about the image of this map for $G = SU(2)$ and $\alpha$ particular generalized Miller--Morita--Mumford classes. We recall the definition of these classes in Section \ref{MMM}. Our results are concerned with manifolds that have the following property.

\begin{Def}\label{classesofmfds}
A manifold $W$ of dimension $2n$ is called \emph{rationally odd}, if all its rational cohomology apart from $H^0(W;\MQ)$ and $H^{2n}(W;\MQ)$ is supported in odd degrees.
\end{Def}

Note that any manifold $W$ of dimension $2n$ which is rationally odd satisfies $\chi(W) \leq 2$, and $\chi(W) = 2$ if and only if $W$ has the rational cohomology of $S^{2n}$. Examples of manifolds that are rationally odd are given by $W_g = \#^g (S^n \times S^n)$ for $n \geq 1$ odd and $g \geq 0$ arbitrary, where $\#^g$ denotes $g$-fold connected sum. The following statement is the first main result of this paper.

\begin{ThmA}\label{mainthm} 
Let $G = SU(2)$ and let $W$ be a manifold of dimension $2n$. Suppose that $W$ is rationally odd and satisfies $\chi(W) < 0$. For any non-trivial smooth action  of $SU(2)$ on $W$ that induces the map 
$s\colon BSU(2) \to B\emph{\Diff(W)},$ let $b_1, \dots, b_n \in \MQ$ be such that the formula
\begin{equation*} 
s^{\ast} \kappa_{ep_i} = b_i\chi(W)c_2^{i} \in H^{4i}(BSU(2);\MQ)
\end{equation*}
holds, where $c_2 \in H^4(BSU(2);\MQ)$ denotes the second Chern class, $p_i$ denotes the $i^{\text{th}}$ Pontryagin class and $e$ denotes the Euler class. Then, the numbers $b_i$ are all integers and moreover, their greatest common divisor is a power of 2. 
\end{ThmA}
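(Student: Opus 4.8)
The plan is to push the computation down to a maximal torus $S^1\subset SU(2)$ and then exploit a rigidity of the fixed point set forced by the hypotheses. To begin, since $\kappa_{ep_i}$ is an integral tautological class and $H^{4i}(BSU(2);\MZ)$ is infinite cyclic on $c_2^i$, the pullback $s^\ast\kappa_{ep_i}$ equals $a_i c_2^i$ for a well-defined \emph{integer} $a_i$, and $b_i=a_i/\chi(W)$ (legitimate as $\chi(W)\ne 0$). So it suffices to show that $\chi(W)$ divides each $a_i$ and that no odd prime divides all of $a_1/\chi(W),\dots,a_n/\chi(W)$.

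Next I would restrict the action to $S^1\subset SU(2)$: the map $H^\ast(BSU(2);\MQ)\to H^\ast(BS^1;\MQ)$ is injective and sends $c_2$ to $-u^2$ (with $u$ a degree-$2$ generator), so it is enough to understand $(s^\ast\kappa_{ep_i})|_{BS^1}$. Writing this as the $S^1$-equivariant fibre integral of $e_{S^1}(TW)\,p_i^{S^1}(TW)$ and applying Atiyah--Bott--Berline--Vergne localization over the fixed components $W^{S^1}=\bigsqcup_j F_j$, the key simplification is $e_{S^1}(TW)|_{F_j}=e(TF_j)\cdot e_{S^1}(N_j)$, so the normal Euler class cancels and integration against $e(TF_j)$ extracts the $H^0(F_j)$-component weighted by $\chi(F_j)$. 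This is precisely the technical constraint advertised in the abstract; explicitly, with $m_{j,1},\dots,m_{j,r_j}$ the normal $S^1$-weights along $F_j$ and $r_j=n-\tfrac12\dim F_j$, it reads
\begin{equation*}
(-1)^i a_i \;=\; \sum_j \chi(F_j)\, e_i\!\bigl(m_{j,1}^2,\dots,m_{j,r_j}^2\bigr),\qquad 0\le i\le n ,
\end{equation*}
where $e_i$ is the $i$-th elementary symmetric polynomial; the case $i=0$ recovers $\chi(W)=\sum_j\chi(F_j)$.

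The conceptual heart of the argument, and the step I expect to be the main obstacle, is to pin down $W^{S^1}$. Here the hypothesis that $W$ is rationally odd is essential: it forces the even part of $H^\ast_{S^1}(W;\MQ)$ to have $\MQ[u]$-rank at most $b_{\even}(W)=2$, and the localization theorem identifies this rank with $\sum_j b_{\even}(F_j)$. Since each connected $F_j$ contributes at least $1$, and at least $2$ once $\dim F_j>0$ (such an $F_j$ being even-dimensional and orientable), $W^{S^1}$ has at most two components; as Poincaré duality on the odd-degree cohomology makes $\chi(W)$ even and $\chi(W^{S^1})=\chi(W)<0$, the cases "one point" and "two points" are excluded, leaving $W^{S^1}=F$ a single connected, positive-dimensional, rationally odd submanifold with $\chi(F)=\chi(W)$. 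The displayed identity then collapses to $(-1)^i a_i=\chi(W)\,e_i(m_1^2,\dots,m_r^2)$, whence
\begin{equation*}
b_i=(-1)^i e_i\!\bigl(m_1^2,\dots,m_r^2\bigr)\in\MZ ,
\end{equation*}
with $m_1,\dots,m_r$ the normal weights of $F$ and $r\ge 1$ because a non-trivial $SU(2)$-action cannot restrict trivially to $S^1$ (so $F\ne W$). This settles integrality.

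It remains to show $\gcd(b_1,\dots,b_n)$ is a power of $2$. As $b_i=0$ for $i>r$, this reduces to showing $\gcd_{1\le i\le r}e_i(m_1^2,\dots,m_r^2)$ has no odd prime divisor; and an odd prime $q$ divides this gcd exactly when $\prod_k(1+m_k^2 z)\equiv 1$ in $\MF_q[z]$, i.e.\ exactly when $q$ divides every $m_k$. To preclude this I would exhibit a normal weight equal to $1$ or $2$. If $W^{SU(2)}\ne\emptyset$, a point of $W^{SU(2)}\subseteq F$ has tangent space a genuine real $SU(2)$-representation whose $S^1$-moving part $N_p$ (carrying the $m_k$) is a sum of blocks of weights $\{2,4,\dots,2\ell\}$ (real-type irreducibles) and $\{1,1,3,3,\dots,2\ell+1,2\ell+1\}$ (quaternionic-type ones), each non-trivial block containing a $1$ or a $2$, and non-triviality of the action makes $N_p$ non-zero at a suitable $p$; if $W^{SU(2)}=\emptyset$, every orbit meeting $F$ is $SU(2)/S^1$ or $SU(2)/N(S^1)$, and the tangent direction of such an orbit contributes an $S^1$-weight-$2$ summand normal to $F$ (the root of $SU(2)$). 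In either case some $m_k\in\{1,2\}$, so no odd prime divides the gcd, and $\gcd(b_1,\dots,b_n)$ is a power of $2$.
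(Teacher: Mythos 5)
Your argument is correct, and its skeleton is the same as the paper's: restrict the action to a maximal torus $S^1\subset SU(2)$, use the localization/Betti-number bound (the paper's Lemma~\ref{circleactionbound}) to show the fixed set is a single connected component $F$ with $\chi(F)=\chi(W)$, identify $b_i$ up to sign with elementary symmetric polynomials in the squared normal weights, produce a weight $1$ or $2$ by the dichotomy ``$SU(2)$-fixed point (real representation theory of $SU(2)$, as in Proposition~\ref{weights})'' versus ``isotropy $S^1$ or its normalizer (orbit direction)'', and finish with the mod-$p$ argument on the coefficients. The one genuinely different ingredient is how you obtain the fixed-point formula: you apply Atiyah--Bott--Berline--Vergne localization to the equivariant integral of $e_{S^1}p_i^{S^1}$ over the Borel construction, exploiting the cancellation $e_{S^1}(TW)|_{F_j}=e(TF_j)\,e_{S^1}(N_j)$, whereas the paper's Lemma~\ref{pullbackformula} is proved via the Becker--Gottlieb transfer together with an additivity/decomposition argument over invariant neighborhoods of the fixed components. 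Your route is quicker if one takes ABBV (with rational coefficients) as given, while the transfer argument stays closer to the definition of $\kappa_{ec}$ as a fiber integral and avoids localization denominators; the resulting identities coincide. Two cosmetic discrepancies, neither of which affects validity: in your convention the restriction sends $c_2$ to $-u^2$ (the paper writes $\gamma^2$), which only changes the sign of $b_i$ and is irrelevant for integrality and the gcd; and the orbit-direction weight for isotropy $S^1$ or $N(S^1)$ is the root weight $2$, as you say (the paper states $1$)---in either case it lies in $\{1,2\}$, which is all that is used. Finally, your phrase ``at a suitable $p$'' in the case $W^{SU(2)}\neq\emptyset$ needs no extra care: since $r\geq 1$ and the tangential $S^1$-representation is constant along the connected $F$, the $S^1$-moving part is nonzero at \emph{every} point of $W^{SU(2)}\subseteq F$, so the tangential $SU(2)$-representation there is automatically non-trivial.
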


\begin{Rem} 
As any power of $2$ is non-zero, one obtains from Theorem A that for any manifold as stated and \emph{any} non-trivial action of $SU(2)$ on $W$ that induces
\begin{equation*}
s\colon BSU(2) \to B\Diff(W),
\end{equation*}
at least one of the classes $s^{\ast}\kappa_{ep_i} \in H^{4i}(BSU(2);\MQ)$ is non-zero.
\end{Rem}

It is worth mentioning that for $G = SU(2)$ and a fixed manifold $W$, the individual classes $\kappa_{ep_i}$ can in fact take infinitely many values when all possible actions of $G$ on $W$ are considered.  We prove this in Section \ref{infinitely}. It is also explained how this implies that there exist infinitely many non-isomorphic kinetic smooth $S^2 \times S^2$-bundles over $BSU(2)$, see Corollary \ref{InfinitelyMany}. Once Theorem A is established, we use it to prove the following  second main result. It answers Question \ref{question} in infinitely many cases, in particular it applies to $W_g = \#^g (S^n \times S^n)$ if $n \geq 3$ is odd and $g > 1$.

\begin{ThmB}\label{ThmB} Let $W$ be a closed connected manifold which is rationally odd, admits a non-trivial smooth action of $SU(2)$ and satisfies $\chi(W) < 0$. Then there exist smooth $W$-bundles over $BSU(2)$ which are non-kinetic.\end{ThmB}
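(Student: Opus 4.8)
The plan is to use Theorem A as a divisibility obstruction and then exhibit a smooth $W$-bundle over $BSU(2)$ that is forced to violate it.

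\textbf{Step 1: turning ``non-kinetic'' into arithmetic.} Since $H^{\ast}(BSU(2);\MQ)=\MQ[c_2]$ and $\chi(W)\neq 0$, every smooth $W$-bundle $E\to BSU(2)$ determines unique rationals $b_1(E),\dots,b_n(E)$ with $\kappa_{ep_i}(E)=b_i(E)\,\chi(W)\,c_2^{i}$. Suppose $E$ is kinetic, with classifying map $BSU(2)\to B\Diff(W)$ induced by an action $\alpha$. If $\alpha$ is trivial the classifying map is null, so $\kappa_{ep_i}(E)=0$ for every $i\geq 1$; if $\alpha$ is non-trivial, Theorem A says the $b_i(E)$ are integers whose greatest common divisor is a power of $2$. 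Consequently, to prove the theorem it suffices to construct a single smooth $W$-bundle $E_0\to BSU(2)$ such that some $\kappa_{ep_{i_0}}(E_0)\neq 0$ while $\bigl(b_1(E_0),\dots,b_n(E_0)\bigr)$ is \emph{not} a tuple of integers with gcd a power of $2$ --- e.g.\ a tuple of integers, not all zero, each divisible by $3$.

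\textbf{Step 2: the construction.} By hypothesis $SU(2)$ acts non-trivially on $W$; let $E_\rho\to BSU(2)$ be the associated kinetic bundle, and write $\kappa_{ep_i}(E_\rho)=b_i^{\rho}\,\chi(W)\,c_2^{i}$. By Theorem A the $b_i^{\rho}$ are integers, and by the Remark following Theorem A we have $b_{i_0}^{\rho}\neq 0$ for some $i_0$. Choose a self-map $f\colon BSU(2)\to BSU(2)$ of degree $9$ on $H^4$, i.e.\ with $f^{\ast}c_2=9\,c_2$; such a self-map of $\mathbb{HP}^{\infty}$ exists, since every perfect square is realized as the degree of a self-map of $\mathbb{HP}^{\infty}$ (a classical fact). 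Put $E_0:=f^{\ast}E_\rho$, again a smooth $W$-bundle over $BSU(2)$. Since the tautological classes are natural under pullback, $\kappa_{ep_i}(E_0)=f^{\ast}\kappa_{ep_i}(E_\rho)=b_i^{\rho}\,\chi(W)\,(f^{\ast}c_2)^{i}=9^{i}\,b_i^{\rho}\,\chi(W)\,c_2^{i}$, so $b_i(E_0)=9^{i}b_i^{\rho}$ for $1\leq i\leq n$. These are integers, each divisible by $3$ (as $i\geq 1$), and not all zero because $\kappa_{ep_{i_0}}(E_0)=9^{i_0}b_{i_0}^{\rho}\chi(W)c_2^{i_0}\neq 0$; hence their gcd is a non-zero multiple of $3$, in particular not a power of $2$. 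Therefore $E_0$ satisfies the conditions of Step 1 and is non-kinetic.

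\textbf{Where the difficulty lies.} All the homotopical substance is absorbed into Theorem A; given it, Step 1 is formal and the arithmetic in Step 2 is immediate, so the remaining point is the realizability input in Step 2. It is genuinely restrictive: applying the Steenrod power $P^1$ to $c_2\in H^{\ast}(BSU(2);\MF_p)$ (where $P^1c_2$ is a non-zero multiple of $c_2^{(p+1)/2}$) forces the degree $d$ of any self-map of $\mathbb{HP}^{\infty}$ to satisfy $d\equiv d^{(p+1)/2}\pmod p$ for every odd prime $p$, whence $d$ is a square; so one genuinely needs the converse, that every square --- in particular $9$ --- is realized. If one prefers to avoid self-maps of $\mathbb{HP}^{\infty}$, an alternative is to obtain $E_0$ from $E_\rho$ by a fiberwise cut-and-paste inside an embedded disc-subbundle, using $W\#S^{2n}\cong W$ to replace a tubular neighbourhood of a section by one twisted by an auxiliary vector bundle over $BSU(2)$; but then computing the resulting change in $\kappa_{ep_i}$ (with several correction terms vanishing because $H^{\ast}(BSU(2);\MQ)$ lives in degrees $\equiv 0\bmod 4$) and arranging that the required Pontryagin data is realized by an honest integral bundle, so that no odd prime is lost, becomes the main bookkeeping burden.
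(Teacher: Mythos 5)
Your argument is essentially the paper's own proof: the paper pulls back the kinetic bundle coming from the given action along Sullivan's generalized Adams operation $\psi_p$ for an odd prime $p$ (which multiplies $H^{4i}(BSU(2);\MQ)$ by $p^{2i}$) and concludes from Theorem A exactly as you do in Steps 1--2 with $p=3$. One small correction: by the Sullivan--Mislin theorem the degrees realized by self-maps of $\mathbb{HP}^{\infty}$ are $0$ and the \emph{odd} squares, not all perfect squares, but since $9$ is an odd square your construction is unaffected.
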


To our knowledge, Question \ref{question} has not been addressed at any point in the literature; Theorem B seems to be the first existence result on manifold bundles that are non-kinetic. If one replaces $\Diff(W)$ by a compact Lie group $H$, i.e., if one considers $H$-principal bundles instead of smooth $W$-bundles, the question of understanding the map $\Hom(G,H) \to [BG,BH]$ and especially its target, is a classical problem in algebraic topology \cite{AM76, DwyerZabrodsky, Notbohm, Bob2}. 

\subsection*{Acknowledgements} 
I am deeply grateful to my PhD advisor S{\o}ren Galatius for his continuous encouragement and support and I would like to thank Bertram Arnold and Alexander Kupers, who also suggested the name (non-)kinetic for the two types of bundles that are considered, for helpful discussions.  

A substantial part of the research outlined in this work was done during two very pleasurable visits at the Department of Mathematics at the University of Copenhagen, which were financially supported by the Danish National Research Foundation through the Centre for Symmetry and Deformation (DNRF92) and by the European Research Council (ERC) under the European Union's Horizon 2020 research and innovation program (grant agreement No 682922).

\section{Recollections on the generalized Miller--Morita--Mumford-classes}\label{MMM}
As above, let $W$ be a closed connected manifold of dimension $2n$. Let us recall the definition of generalized Miller-Morita-Mumford-classes $\kappa_c \in H^{\ast}(B\Diff(W);\MQ)$ \cite{Miller, Morita, Mumford}. For a smooth $W$-bundle $\pi\colon E \to B$, let $P \to B$ denote the associated principal $\Diff(W)$-bundle. The vertical tangent bundle $T_{\pi}E$ is the vector bundle  
$P \times_{\Diff(W)} TW \to P \times_{\Diff(W)} W = E$. For any class $c \in H^{\ast}(BSO(2n);\MQ) \cong \MQ[p_1,p_2,\dots,p_{n-1},p_n,e]/(e^2-p_n)$, one may form 
\[\kappa_{c} := \int_{\pi} c(T_{\pi}E) \in H^{\ast-2n}(B;\MQ),\]
where the integral stands for integration along the fibers. In the case that $E$ and $B$ are smooth manifolds, the two constructions involved in this definition have a geometric interpretation: the vertical tangent bundle $T_{\pi}E$ is isomorphic to $\ker \left(d\pi \colon TE \to TB\right)$, and the map $\int_{\pi} \colon H^{\ast}(E;\MQ) \to H^{\ast-2n}(B;\MQ)$ is Poincar{\'e} dual to the map $\pi_{\ast}$ induced by $\pi$ on homology.

The corresponding class $\kappa_c$ is usually called the \emph{tautological class} or \emph{generalized Miller-Morita-Mumford-class} associated to $c$, though sometimes it is also simply referred to by the name $\kappa$-\emph{class}. This construction is natural in $B$, so it makes sense to regard the $\kappa_c$ as classes in the cohomology of $B\Diff(W)$, the base of the universal smooth $W$-bundle. We usually require $c$ to be a monomial in the classes $p_1,\dots,p_{n-1}$ and $e$; the collection of all such monomials is denoted by $\CB$.
\section{Circle actions}
Suppose the circle $S^1$ acts smoothly on a manifold $W$ of dimension $2n$, giving rise to a map 
$s\colon BS^1 \to B\Diff(W).$ For any class $c \in H^{\ast}(BSO(2n);\MQ)$, it is useful to know the value of $s^{\ast}\kappa_{ec} \in H^{\ast}(BS^1;\MQ)$. Let us denote the canonical generator (i.e., the Euler class) of the $\MQ$-algebra $H^{\ast}(BS^1;\MQ)$ by $\gamma$. To give a formula for $s^{\ast}\kappa_{ec} \in H^{\ast}(BS^1;\MQ)$, let us first analyze the linear case, i.e., we consider a linear action of $S^1$ on $\mathbb R^{2n}$.  For any $a \in \MZ$, let us denote the $2$-dimensional real representation $S^1 \to S^1 = SO(2), z \mapsto z^a$ by $\ell_a$.
Then $\ell_0$ is the trivial 2-dimensional representation, and the $\ell_a$ for $a \neq 0$ are all the non-trivial irreducible real representations of $S^1$. Thus, any $2n$-dimensional real representation $V$ has the form $\ell_{a_1} \oplus \dots \oplus \ell_{a_n}$ for numbers $a_1, \dots, a_n \in \MZ$, the \emph{weights} of $V$. 

\begin{Lem} \label{Pontryagin}\label{Pontryagin} 
For $a_1, \dots, a_n \in \MZ$, the map $BS^1 \to BSO(2n)$ induced by the real representation $\ell_{a_1} \oplus \dots \oplus \ell_{a_n}$ sends $c \in \CB^{(2j)} \subset H^{2j}(BSO(2n);\MQ)$ to $\sigma_c(a_1,\dots,a_n)\gamma^{j} \in H^{2j}(BS^1;\MQ)$, which is defined as follows. For $c = p_i$, we set 
\begin{equation*}\label{cancel}
\sigma_{p_i}(a_1,\dots, a_n) = (-1)^i\sigma_{2i}(a_1,-a_1,a_2,-a_2,\dots,a_n,-a_n) 
= \sigma_{i}(a_1^2,a_2^2, \dots, a_n^2), 
\end{equation*}
where $\sigma_{j}$ denotes the $j^{\text{th}}$ elementary symmetric polynomial. We also set  
\begin{equation*}
\sigma_e(a_1,\dots,a_n) = a_1\cdots a_n.
\end{equation*}
Finally, we impose the conditions $\sigma_{cc'} = \sigma_c \sigma_{c'}$ to define $\sigma_c$ for all $c \in \CB$ (there is no contradiction arising from the relation $e^2 = p_n$).
\end{Lem}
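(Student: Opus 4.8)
The plan is to reduce the statement to the splitting of the given representation into two-dimensional summands together with the standard characteristic-class computations for line bundles. Write $\xi \to BS^1$ for the oriented rank-$2n$ real vector bundle classified by the map $BS^1 \to BSO(2n)$ induced by $\ell_{a_1} \oplus \dots \oplus \ell_{a_n}$. Since this representation is literally a direct sum, $\xi$ splits as a Whitney sum $\xi \cong \xi_1 \oplus \dots \oplus \xi_n$, where $\xi_k$ is the oriented $2$-plane bundle associated to $\ell_{a_k}$. Identifying $SO(2) = U(1)$, the representation $\ell_{a_k}$ is the underlying real representation of the complex character $z \mapsto z^{a_k}$, so $\xi_k$ is the realification of $L^{\otimes a_k}$, where $L$ is the tautological complex line bundle on $BS^1 = \CP$; hence $e(\xi_k) = c_1(L^{\otimes a_k}) = a_k\gamma$.

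Granting this, I would first treat the Euler class: multiplicativity under Whitney sums gives $e(\xi) = \prod_{k=1}^n e(\xi_k) = (a_1 \cdots a_n)\gamma^n$, which, since $e \in H^{2n}(BSO(2n);\MQ)$, is precisely the assertion $\sigma_e(a_1,\dots,a_n) = a_1\cdots a_n$. For the Pontryagin classes I would pass to the complexification: $\xi_k \otimes_{\Reals}\MC \cong L^{\otimes a_k} \oplus \overline{L^{\otimes a_k}}$, so $\xi \otimes_{\Reals}\MC$ has Chern roots $a_1\gamma, -a_1\gamma, \dots, a_n\gamma, -a_n\gamma$, whence $c_{2i}(\xi \otimes \MC) = \sigma_{2i}(a_1,-a_1,\dots,a_n,-a_n)\gamma^{2i}$. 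Using the convention $p_i = (-1)^i c_{2i}(-\otimes\MC)$ together with the polynomial identity $\prod_{k}(1 - a_k^2 t^2) = \sum_{i} (-1)^i \sigma_i(a_1^2,\dots,a_n^2)\,t^{2i}$ — which yields $\sigma_{2i}(a_1,-a_1,\dots) = (-1)^i\sigma_i(a_1^2,\dots,a_n^2)$ — one reads off both displayed expressions for $\sigma_{p_i}$. (Equivalently: each $\xi_k$ has rank $2$, so $p(\xi_k) = 1 + e(\xi_k)^2 = 1 + a_k^2\gamma^2$, and $p(\xi) = \prod_k (1 + a_k^2\gamma^2)$.)

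Finally, the map in question is the ring homomorphism $H^*(BSO(2n);\MQ) \to H^*(BS^1;\MQ)$ induced by $BS^1 \to BSO(2n)$; since $H^*(BSO(2n);\MQ)$ is a polynomial ring on $p_1,\dots,p_{n-1},e$ with monomial basis $\CB$, the values on these generators computed above determine the map completely and force the multiplicativity $\sigma_{cc'} = \sigma_c\sigma_{c'}$. The claimed absence of a contradiction arising from the relation $e^2 = p_n$ then just amounts to the identity $\sigma_e(a_1,\dots,a_n)^2 = (a_1\cdots a_n)^2 = \sigma_n(a_1^2,\dots,a_n^2)$, i.e.\ to $\sigma_{e^2}$ agreeing with the value the Pontryagin formula assigns to $p_n$, which is immediate. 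The computations are entirely standard, so I do not anticipate a real obstacle; the only points requiring care are fixing the sign convention for Pontryagin classes consistently and verifying the elementary symmetric polynomial identity above, both of which are routine.
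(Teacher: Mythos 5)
Your argument is correct and is essentially the paper's own proof: both compute the pullback by complexifying each two-dimensional summand $\ell_{a_k}$ into $L_{a_k}\oplus L_{-a_k}$, apply the convention $p_i=(-1)^i c_{2i}(\,\cdot\,\otimes\MC)$ to read off the Pontryagin classes from the Chern roots $\pm a_k\gamma$, and use the Whitney sum formula for the Euler class. Your version merely phrases the paper's commutative diagram of representations as a Whitney splitting of the associated bundle and spells out the elementary symmetric identity and the consistency check for $e^2=p_n$, which the paper leaves implicit.
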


\begin{Rem}
The second equality sign in equation (\ref{cancel}) above holds since the summands appearing in $\sigma_{2i}(a_1,-a_1,a_2,-a_2,\dots,a_n,-a_n)$ cancel each other, except those which contain both $a_k$ and $-a_k$ for $1 \leq k \leq n$ or neither of each of them. 
\end{Rem}

\begin{proof} Let $a \in \MZ$ be arbitrary and let us denote the complex representation $S^1 \to U(1), z \mapsto z^a$
by $L_a$. We then have $\ell_a \otimes \MC \cong L_{-a} \oplus L_a$. Thus, we obtain the following commutative diagram.
\begin{equation*}
\xymatrix{
S^1 \ar[r]^{\ell_{a_1} \oplus \dots \oplus \ell_{a_n}}  \ar[d]_{(L_{a_1},L_{-a_1},\dots,L_{a_n},L_{-a_{n}})}& SO(2n) \ar[d]^{\otimes \MC} \\
U(1) \times \dots \times U(1) \ar[r] & U(2n)
}
\end{equation*}
Going to classifying spaces and using the definition $p_i(V) = (-1)^ic_{2i}(V\otimes \MC)$ of the Pontryagin classes in terms of Chern classes, this diagram implies the assertion for all the Pontryagin classes. For the Euler class $e$, the assertion follows from the Whitney sum formula.
\end{proof}

We now prove a result about the possible fixed points that could arise from smooth circle actions on manifolds. For this, we consider the rational Betti numbers of a space $X$, given by $b_j(X) := \dim_{\MQ} H^j(X;\MQ)$. For a finite CW-complex $X$, we can define the following non-negative integers by summing up the even and odd Betti numbers, $\be (X) = \sum_{j = 0}^{\infty} b_{2j}(X)$ and $\bo (X) = \sum_{j = 1}^{\infty} b_{2j-1}(X)$. Note that the difference $\be (X) - \bo (X)$ is equal to the Euler characteristic $\chi(X)$ of $X$.

\begin{Lem}\label{circleactionbound} 
Suppose $S^1$ acts smoothly on the manifold $W$. Then, the set of fixed points $M = W^{S^1}$ is a submanifold of even codimension, and there is an integer $k \geq 0$ such that $\be (M) = \be(W) - k$ and $\bo (M) = \bo (W) - k$.
\end{Lem}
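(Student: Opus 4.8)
The proof naturally breaks into the local statement that $M=W^{S^1}$ is a submanifold of even codimension and the global Betti-number comparison, and I would treat these separately. For the local part, the plan is to invoke the slice theorem for the compact group $S^1$: a neighbourhood of any fixed point $x\in M$ is equivariantly diffeomorphic to a neighbourhood of the origin in $T_xW$ with the linearised action. By the classification of real $S^1$-representations recalled before Lemma~\ref{Pontryagin}, $T_xW\cong\ell_0^{\oplus m}\oplus\ell_{a_1}\oplus\cdots\oplus\ell_{a_r}$ with all $a_j\neq 0$, and the fixed subspace is exactly the trivial summand $\ell_0^{\oplus m}$. Hence near $x$ the set $M$ is a submanifold of dimension $m$, and its codimension $\dim W-m=2r$ is even; the complementary summand moreover endows the normal bundle of $M$ in $W$ with a fibrewise fixed-point-free $S^1$-action, which will be convenient later but is not needed here.

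For the Betti-number statement I would pass to Borel equivariant cohomology $\HE^{*}(X;\MQ):=H^{*}(ES^1\times_{S^1}X;\MQ)$, regarded as a module over $H^{*}(BS^1;\MQ)=\MQ[\gamma]$, noting that for a compact smooth $S^1$-manifold it is finitely generated over $\MQ[\gamma]$ (it admits a finite $S^1$-CW structure). The key point is to decompose everything according to the parity of the total degree and count ranks over the domain $\MQ[\gamma]$, using that rank is additive in short exact sequences. In the Serre spectral sequence of $W\to ES^1\times_{S^1}W\to BS^1$ one has $E_2=\MQ[\gamma]\otimes H^{*}(W;\MQ)$ because $S^1$ is connected; its even and odd parts are free $\MQ[\gamma]$-modules of ranks $\be(W)$ and $\bo(W)$, and the differentials are $\MQ[\gamma]$-linear and raise total degree by one, hence interchange the two parities. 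A page-by-page rank count then shows that the rank of the even part can only drop and that (even rank)$-$(odd rank) is invariant; since $E_\infty$ is the associated graded of $\HE^{*}(W;\MQ)$, this gives
\[
\rk_{\MQ[\gamma]}\HE^{\even}(W;\MQ)\le\be(W),\qquad \rk_{\MQ[\gamma]}\HE^{\odd}(W;\MQ)\le\bo(W),
\]
together with $\rk_{\MQ[\gamma]}\HE^{\even}(W;\MQ)-\rk_{\MQ[\gamma]}\HE^{\odd}(W;\MQ)=\be(W)-\bo(W)$.

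The second ingredient is the Borel--Atiyah--Segal--Quillen localization theorem: the restriction along $M\hookrightarrow W$ becomes an isomorphism after inverting $\gamma$. Since $S^1$ acts trivially on $M$ we have $\HE^{*}(M;\MQ)=\MQ[\gamma]\otimes H^{*}(M;\MQ)$, whose even (resp.\ odd) part is free over $\MQ[\gamma]$ of rank $\be(M)$ (resp.\ $\bo(M)$); the restriction map preserves degree and $\gamma$-power torsion dies after inverting $\gamma$, so comparing ranks yields $\rk_{\MQ[\gamma]}\HE^{\even}(W;\MQ)=\be(M)$ and $\rk_{\MQ[\gamma]}\HE^{\odd}(W;\MQ)=\bo(M)$. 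Substituting into the three relations above gives $\be(M)\le\be(W)$, $\bo(M)\le\bo(W)$ and $\be(M)-\bo(M)=\be(W)-\bo(W)$, so $k:=\be(W)-\be(M)=\bo(W)-\bo(M)$ is a well-defined non-negative integer, which is what we want. (In particular the classical fact $\chi(M)=\chi(W)$ drops out.)

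The slice-theorem argument and the spectral-sequence bookkeeping are routine; the substantive input is the localization theorem, and this is where one genuinely uses $G=S^1$ — so that the coefficient ring is the PID $\MQ[\gamma]$ and inverting $\gamma$ kills precisely the classes supported away from $M$. The one thing that requires real care is to run the whole parity decomposition and all rank computations over $\MQ[\gamma]$ rather than over $\MQ$, so that $E_\infty$ actually computes the $\MQ[\gamma]$-ranks of the graded pieces of $\HE^{*}(W;\MQ)$ and the even and odd bounds emerge with the \emph{same} defect $k$.
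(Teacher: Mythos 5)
Your proposal is correct and follows essentially the same route as the paper: the Borel localization theorem over $\MQ[\gamma]$ combined with a rank/Euler-characteristic count in the Serre spectral sequence of $W \to W \times_{S^1} ES^1 \to BS^1$, with your page-by-page bookkeeping making explicit the common defect $k$ that the paper extracts via the equality of $\MQ[\gamma^{\pm 1}]$-Euler characteristics. The only cosmetic difference is that you prove the submanifold statement via the slice theorem, whereas the paper simply cites the literature for that part.
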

\begin{proof}
The first part of the statement is a well-known fact, see \cite[Chapter 2.6 and 2.7]{Duistermaat}. The idea of the proof of the second part, which was inspired by Example 3.4 in \cite{Oscarnew}, is to use equivariant localization. Recall that for any finite $S^1$-CW-complex $X$, one can consider the equivariant cohomology groups, defined as $\HE^{\ast}(X;\MQ) = H^{\ast}(X \times_{S^1} ES^1;\MQ).$ We then have $\HE^{\ast}(\pt;\MQ) = H^{\ast}(BS^1;\MQ) \cong \MQ[\gamma],$ and $\smash{\EE^{\ast}(X;\MQ)} := \HE^{\ast}(X;\MQ) \otimes_{\MQ[\gamma]} \MQ[\gamma^{\pm1}]$ is the localized equivariant cohomology of $X$. This $\MQ[\gamma ^{\pm 1}]$-module inherits a $\MZ/2$-grading coming from the even and odd degrees of $\HE^{\ast}(X;\MQ)$. By an induction over all cells with finite stabilizers, it is proved that the inclusion $\smash{X^{S^1} \hookrightarrow X}$ induces an isomorphism on $\EE^{\ast}(\ \_ \ ;\MQ)$, see \cite[XII.§3]{Borel}. Since $S^1$ acts trivially on $M$, we get that $\be(M)$ and $\bo(M)$ agree with the $\MQ[\gamma ^{\pm 1}]$-rank of the even and odd part of $\EE^{\ast}(W^{S^1}) =\EE^{\ast}(W)$. Now the Serre spectral sequence arising from the fibration $W \to W \times_{S^1} ES^1 \to BS^1$ shows that these numbers are bounded by $\be(W)$ and $\bo(W)$ and their difference agrees with the $\MQ[\gamma^{\pm 1}]$-Euler characteristic of $\EE^{\ast}(W^{S^1}) =\EE^{\ast}(W)$, which is equal to the Euler characteristic of $W$, since taking the Euler characteristic commutes with spectral sequences that have finitely many non-trivial differentials. Since $\chi(W) = \be(W) - \bo(W)$, this finishes the proof.
\end{proof}

Suppose $S^1$ acts smoothly on a $2n$-dimensional closed manifold $W$. Choose a Riemannian metric on $W$ which is invariant under the circle action. For any fixed point $x \in W$ of the action, we get a $2n$-dimensional orthogonal representation $S^1 \to O(T_xW)$ at $x$, which we denote by $V_x$. Since $V_x$ depends continuously on $x$, any two fixed points that can be joined by a path will produce isomorphic representations. The following result is central for proving Theorem A.
\begin{Lem}\label{pullbackformula} Suppose $S^1$ acts smoothly on a manifold $W$ of dimension $2n$. Let $W^{S^1} = M_1 \amalg \dots \amalg M_r$ be the decomposition of the fixed point set into connected components. Suppose that for $x \in M_i$, $1 \leq i \leq r$, the tangential representation $V_{x_i}$ has weights $a_{i,1}, \dots, a_{i,n}$. Then, for any $c \in \CB^{(2j)}$, the induced map \emph{$s\colon BS^1 \to B\Diff(W)$} satisfies $s^{\ast} \kappa_{ec} = \left(\sum_{i = 1}^r \chi(M_i)\sigma_c(a_{i,1},\dots,a_{i,n})\right)\gamma^{2j}$.
\end{Lem}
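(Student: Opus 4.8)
The plan is to realize $s$ as the classifying map of the Borel fibration $\pi\colon W_{hS^1}:=W\times_{S^1}ES^1\to BS^1$, whose vertical tangent bundle is $T_\pi=TW\times_{S^1}ES^1$, and then to compute by equivariant localization. Under the tautological identification $H^\ast(W_{hS^1};\MQ)=\HE^\ast(W;\MQ)$, the class $(ec)(T_\pi)$ becomes the $S^1$-equivariant characteristic class $(ec)(TW)\in\HE^{2n+2j}(W;\MQ)$, and fibre integration $\int_\pi$ becomes equivariant integration $\int_W\colon\HE^\ast(W;\MQ)\to\HE^{\ast-2n}(\pt;\MQ)=H^{\ast-2n}(BS^1;\MQ)$. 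Naturality of fibre integration along $s$ then gives $s^\ast\kappa_{ec}=\int_W(ec)(TW)$, so it suffices to evaluate the right-hand side.

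For this I would invoke the Atiyah--Bott--Berline--Vergne localization formula, the quantitative refinement of the localization isomorphism used in the proof of \cref{circleactionbound} (cf.\ \cite{Borel}): after inverting $\gamma$, every $\alpha\in\HE^\ast(W;\MQ)$ satisfies $\int_W\alpha=\sum_{i=1}^r\int_{M_i}\alpha|_{M_i}/e_{S^1}(\nu_i)$, where $\nu_i$ is the $S^1$-equivariant normal bundle of $M_i$ in $W$. Note that $S^1$ acts on the fibres of $\nu_i$ without zero weights, so $\nu_i$ carries a canonical complex structure, $e_{S^1}(\nu_i)$ becomes invertible in $H^\ast(BS^1;\MQ)[\gamma^{-1}]$, and we may orient each $M_i$ via the orientation of $W$ and the complex orientation of $\nu_i$. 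Applying the formula with $\alpha=(ec)(TW)$ and using the $S^1$-equivariant splitting $TW|_{M_i}\cong TM_i\oplus\nu_i$ over $M_i$ — on which $S^1$ acts trivially, so that $\HE^\ast(M_i;\MQ)=H^\ast(M_i;\MQ)\otimes H^\ast(BS^1;\MQ)$ and $e_{S^1}(TM_i)=e(TM_i)\otimes 1$ — multiplicativity of the Euler class yields
\[\frac{(ec)(TW)|_{M_i}}{e_{S^1}(\nu_i)}=\frac{e(TM_i)\cdot e_{S^1}(\nu_i)\cdot c(TW|_{M_i})}{e_{S^1}(\nu_i)}=e(TM_i)\cdot c(TW|_{M_i}).\]

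Finally I would integrate over $M_i$. Since $e(TM_i)\in H^{\dim M_i}(M_i;\MQ)$, multiplication by it annihilates the part of $c(TW|_{M_i})$ of positive $M_i$-degree, so only the $H^0(M_i;\MQ)\otimes H^\ast(BS^1;\MQ)$-component of $c(TW|_{M_i})$ survives; that component equals the restriction $c(V_{x_i})$ of the equivariant bundle $TW|_{M_i}$ to a point $x_i\in M_i$, and $\int_{M_i}e(TM_i)=\chi(M_i)$ by Gauss--Bonnet. By \cref{Pontryagin}, applied to the map $BS^1\to BSO(2n)$ classifying the tangential representation $V_{x_i}=\ell_{a_{i,1}}\oplus\dots\oplus\ell_{a_{i,n}}$, one has $c(V_{x_i})=\sigma_c(a_{i,1},\dots,a_{i,n})\gamma^{j}$, so the contribution of $M_i$ is $\chi(M_i)\sigma_c(a_{i,1},\dots,a_{i,n})\gamma^{j}$. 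Summing over $i=1,\dots,r$ computes $\int_W(ec)(TW)$; and since $s^\ast\kappa_{ec}$ lies in $H^\ast(BS^1;\MQ)=\MQ[\gamma]$, which injects into its localization, the identity persists before inverting $\gamma$, giving the claimed formula. The step I expect to require the most care is the first one — checking that fibre integration over the merely topological bundle $W_{hS^1}\to BS^1$ agrees with equivariant integration, and that ABBV localization is legitimately available in this context with the orientations just described; what follows is bookkeeping with the Künneth decomposition and \cref{Pontryagin}.
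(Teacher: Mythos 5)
Your argument is correct, but it takes a genuinely different route from the paper. The paper never invokes the Atiyah--Bott--Berline--Vergne formula: it works with the Becker--Gottlieb transfer, using the identity $\tau^{\ast}c(T_{\pi}E)=\int_{\pi}ec=s^{\ast}\kappa_{ec}$, the additivity of the transfer (Brumfiel--Madsen) for the decomposition of $E$ into the part over invariant tubular neighborhoods of the $M_i$ and the part over the fixed-point-free complement, the vanishing of the contribution from the fixed-point-free pieces, and the fact that for the (homotopy) trivial piece $E_V\simeq (M_1\amalg\dots\amalg M_r)\times BS^1$ the transfer multiplies by $\chi(M_i)$; the final evaluation is again via Lemma \ref{Pontryagin}. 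Your version replaces the transfer-additivity bookkeeping by the ABBV integration formula applied to $(ec)(TW)\in H_{S^1}^{\ast}(W;\MQ)$, with the cancellation $e_{S^1}(TM_i\oplus\nu_i)/e_{S^1}(\nu_i)=e(TM_i)$ and the observation that restriction to a point extracts the $H^0(M_i)$-component doing the work of the paper's splitting of $W$; the weights including the zeros tangent to $M_i$ enter exactly as in the statement, and injectivity of $\MQ[\gamma]\to\MQ[\gamma^{\pm1}]$ removes the localization. What your route buys is brevity and the avoidance of the transfer additivity theorem and of the paper's claim that the vertical tangent bundles over the fixed-point-free pieces are trivial; what it costs is the need to justify that fiber integration for the Borel fibration $W\times_{S^1}ES^1\to BS^1$ agrees with the equivariant pushforward (e.g.\ via the finite-dimensional approximations $W\times_{S^1}S^{2N+1}\to\MC P^{N}$, which are smooth bundles) and to import ABBV with the orientation conventions you describe --- all standard, and you flag the right point as the delicate one. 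One cosmetic remark: you obtain the coefficient of $\gamma^{j}$ for $c$ of cohomological degree $2j$, which is the degree-correct normalization consistent with Lemma \ref{Pontryagin} and with how the lemma is used in the proof of Theorem A (where $c=p_i$ gives a multiple of $\gamma^{2i}$); the exponent $\gamma^{2j}$ in the statement reflects a different indexing convention for $\CB^{(2j)}$, not a gap in your argument.
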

\begin{proof} For any fibration $E \to B$ for which the fiber $F$ is a finite CW-complex and the base $B$ is connected, Becker and Gottlieb define a transfer map of spectra $\tau\colon \Sigma^{\infty}_+ B \to \Sigma^{\infty}_+ E,$ see \cite{BeckerGottlieb, BrumfielMadsen}. This map is natural in the base and satisfies that the composition $\left(\Sigma^{\infty}_+ \pi \right) \circ \tau \colon \Sigma^{\infty}_+ B \to  \Sigma^{\infty}_+ B$ is multiplication by $\chi(F)$ on singular homology, see \cite{BeckerGottlieb}. Let us now consider the smooth $W$-bundle $\pi \colon E \to BS^1$ arising from pulling back the universal smooth $W$-bundle along the map $s$. The Becker-Gottlieb transfer $\tau\colon \Sigma^{\infty}_+ BS^1 \to \Sigma^{\infty}_+ E$ is then related to fiber integration as follows. For any characteristic class $c = c(T_{\pi} E) \in H^{2j}(E)$ of the vertical tangent bundle  of $\pi$, the equation $\tau^{\ast} c = \int_{\pi} ec = s^{\ast} \kappa_{ec}$ holds. We proceed by splitting $W$ into the manifolds $M_i$ and the open set $U = W - (M_1 \cup \dots \cup M_r)$. Let us decompose the smooth $W$-bundle $E \to BS^1$ accordingly. Moreover, let $V_i$ be pairwise disjoint regular open neighborhoods of the subspaces $M_i$ that are invariant under the $S^1$-action, for all $i = 1, \dots, r$, and let $V = V_1 \cup \dots \cup V_r$. We then have a pull-back square  of bundles over $BS^1$:
\begin{equation*}
\xymatrix{
E_{U \cap V} \ar[r] \ar[d]_j & E_U \ar[d]^h \\
E_V \ar[r]^k & E}
\end{equation*}
In the abelian group of homotopy classes of stable maps $\Sigma^{\infty}_+ BS^1 \to \Sigma^{\infty}_+ E$, $\tau_E = h_{\ast} \tau_{E_U} + k_{\ast} \tau_{E_V} - k_{\ast} j_{\ast} \tau_{E_{U \cap V}}$ holds, see \cite{BrumfielMadsen}. Furthermore, the circle actions on $E_U$ and $E_{U \cap V}$ do not have fixed points, which implies that the vertical tangent bundles for these manifold bundles are trivial. Finally, the total space of the bundle $E_V \to BS^1$ is trivial up to homotopy over $BS^1$, i.e., $E_V \simeq (M_1 \amalg \dots \amalg M_r) \times BS^1$. On the other hand, the vertical tangent bundle $T_{E_V}$ is not trivial: if we restrict it to $V_i$, we get that it is induced from the tangential representation $V_{x_i} \colon BS^1 \to BSO(2n)$. We can thus apply Lemma \ref{Pontryagin} to deduce the following equation.
\begin{equation*}
c(T_{E_V}) = \sum_{i = 1}^r c\left(T_{E_{V_{x_i}}}\right) = \left( \sum_{i = 1}^r c(V_{x_i}) 1_{M_i} \right) \times \gamma^{2j} = \sum_{i = 1}^r \sigma_c(a_{i,1},\dots,a_{i,n}) \left(1_{M_i} \times \gamma^{2j} \right)
\end{equation*}
For a trivial fibration with connected fiber $F$, the Becker-Gottlieb transfer simply multiplies by the Euler characteristic $\chi(F)$, so for the fibration $E_V \to BS^1$ we get by linearity that  $\tau^{\ast} (1_{M_i} \times \gamma^{2j}) = \chi(M_i)\gamma^{2j}.$ Combining everything, we thus obtain
\begin{eqnarray*}
s^{\ast} \kappa_{ec} &=& \tau_{E}^{\ast} c(T_E) \\ &=& \tau_{E_U}^{\ast} (c(T_{E_U})) +  \tau_{E_V}^{\ast} (c(T_{E_V})) - \tau_{E_{U \cap V}}^{\ast} (c(T_{E_{U \cap V}})) \\
&=& \tau_{E_V}^{\ast}(c(T_{E_V})) \\
&=& \sum_{i = 1}^r c(V_{x_i}) \tau_{E_V}^{\ast} (1_{M_i} \times \gamma^{2j}) = \left(\sum_{i = 1}^r \chi(M_i)\sigma_c(a_{i,1},\dots,a_{i,n})\right)\gamma^{2j},
\end{eqnarray*}
where we have used Lemma \ref{Pontryagin} in the last step. This finishes the proof.
\end{proof}

\section{Representation theory of SU(2)}
In this section, we recall some basic representation theory of $SU(2)$, see any book on the representation theory of Lie groups, for instance \cite{Broecker}. All finite-dimensional irreducible complex representations of $SU(2)$ are given by $V_{\lambda}$, where $2\lambda$ is a non-negative integer. The value $\lambda$ is sometimes called \emph{spin} of the representation. These representations are defined as $V_{\lambda} = \Sym^{2\lambda}(V_{\text{reg}}),$
where $V_{\text{reg}} = V_{1/2}$ is the regular representation $\MC^2$ on which $SU(2)$ acts by matrix multiplication. The weights of $V_{\lambda}$ are given by $-2\lambda, -2\lambda + 2, \dots, 2\lambda -2, 2\lambda$. It is also known which of these representations come from real representations: in \cite{Itzkowitz}, it is proven that for a positive integer $d$, there is no real irreducible $SU(2)$-representation of dimension $d$ if $d \equiv 2 \ (\text{mod }4)$. If $d$ is odd, there is exactly one irreducible real $SU(2)$-representation $V^d$ of dimension $d$, which has the property that $V^d \otimes_{\Reals} \MC \cong V_{{(d-1)}/{2}}$ is irreducible again. Finally, if $d$ is divisible by $4$, there is exactly one irreducible real representation $V^d$ of dimension $d$ and $V^d \otimes_{\Reals} \MC \cong V_{{d}/{4}-{1}/{2}} \oplus V_{{d}/{4}-{1}/{2}}$. As any real representation can be decomposed into irreducible representations, this implies the following result.

\begin{Pro}\label{weights} 
Let $V^d$ be any non-trivial $d$-dimensional real representation of $SU(2)$. Then the weights of $V^d \otimes_{\mathbb R} \MC$ are at most $d-1$ in absolute value, and there always exists a weight that is $\pm 1$ or $\pm 2$.
\end{Pro}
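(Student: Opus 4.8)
The plan is to use complete reducibility of finite-dimensional representations of the compact group $SU(2)$ to reduce to irreducible summands, and then read off the weights from the classification recalled just above. First I would write $V^d \cong V^{d_1} \oplus \dots \oplus V^{d_r}$ as a direct sum of irreducible real subrepresentations; by the results quoted from \cite{Itzkowitz}, each $d_k$ is either $1$ (a trivial summand), an odd integer $\geq 3$, or a multiple of $4$. Since $V^d$ is assumed non-trivial, at least one $d_k$ is $\geq 3$. Because complexification is additive, the multiset of weights of $V^d \otimes_{\Reals} \MC$ is the disjoint union of the multisets of weights of the $V^{d_k} \otimes_{\Reals} \MC$, so it suffices to understand the latter.

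For the first assertion I would argue summand by summand: a trivial summand contributes only the weight $0$; if $d_k$ is odd then $V^{d_k} \otimes_{\Reals} \MC \cong V_{(d_k-1)/2}$, whose weights lie in the interval $[-(d_k-1),\, d_k-1]$; and if $4 \mid d_k$ then $V^{d_k} \otimes_{\Reals} \MC \cong V_{d_k/4 - 1/2} \oplus V_{d_k/4 - 1/2}$, whose weights lie in $[-(d_k/2 - 1),\, d_k/2 - 1]$. In all cases the weights of the summand have absolute value at most $d_k - 1 \leq d - 1$, which gives the bound.

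For the second assertion I would single out one non-trivial irreducible summand $V^{d_k}$ with $d_k \geq 3$. If $d_k$ is odd, the spin of $V_{(d_k-1)/2}$ is $(d_k-1)/2$, so its weights are precisely the even integers between $-(d_k - 1)$ and $d_k - 1$, and since $d_k - 1 \geq 2$ this set contains $\pm 2$. If $4 \mid d_k$, then $2\lambda = d_k/2 - 1$ is odd, so the weights of $V_{d_k/4 - 1/2}$ are the odd integers between $-(d_k/2 - 1)$ and $d_k/2 - 1$, a set that always contains $\pm 1$. Either way, $V^d \otimes_{\Reals} \MC$ has a weight equal to $\pm 1$ or $\pm 2$. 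I do not anticipate a genuine obstacle here: the only points requiring care are keeping track of the parity of $2\lambda$ in the two cases of the classification, and observing that the non-triviality hypothesis is exactly what forbids $V^d$ from being a sum of trivial summands, so that an irreducible summand with $d_k \geq 3$ — hence a small non-zero weight — is always present.
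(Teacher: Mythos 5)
Your proposal is correct and follows exactly the route the paper intends: the paper gives no separate proof beyond the remark that decomposing into irreducible real representations and invoking the classification from \cite{Itzkowitz} ``implies the following result,'' and your argument simply carries out that decomposition, tracking the parity of $2\lambda$ in the odd and $4 \mid d_k$ cases to locate a weight $\pm 2$ or $\pm 1$ respectively. Your parity bookkeeping and the use of non-triviality to produce a summand of dimension at least $3$ are both accurate, so there is nothing to add.
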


\section{Proof of Theorem A}

\begin{proof}
Assume $W$ is a closed, connected manifold of dimension $2n$. Recall the map   
\begin{equation*}
D = D^{SU(2),W}_{\kappa_{ep_i}} \colon \Hom(SU(2),\Diff(W))/\Diff(W) \to H^{4i}(BSU(2);\MQ),
\end{equation*}
from the introducton. It takes a smooth action of $SU(2)$ on $W$ that induces a map $s\colon BSU(2) \to B\Diff(W)$ 
and assigns the value $s^{\ast} \kappa_{ep_i} \in H^{4i}(BSU(2);\MQ)$ to it. Fix a maximal torus $S^1 \subset SU(2)$. For any smooth action of $SU(2)$ on $W$, we then get an induced action of the circle $S^1$ on $W$. The fixed point set $M = W^{S^{1}}$ of this action is a submanifold $M \subset W$ of even codimension, see Lemma \ref{circleactionbound}. By the same Lemma, we also deduce that $\be(M) \leq \be(W) = 2$ and $\chi(M) = \chi(W) < 0$. The first condition alone implies that $M$ is either $S^0$ or a connected manifold, and since $\chi(S^0) = 2$, the former cannot be the case. We have thus shown that $M$ has to be connected. For a point $x \in M$, the tangential $S^1$-representation $V_x$ at $x$ is given by a sequence $(a_1, \dots, a_{n})$ of weights. Since $M$ is connected, this sequence is independent of the choice of $x$. 

We now claim that at least one of these weights has to be $\pm 1$ or $\pm 2$. In the case that $x$ is fixed by the whole group $SU(2)$, the $S^1$-representation $V_x$ extends to a non-trivial representation of $SU(2)$, and this case follows from Proposition \ref{weights}. 

The second case is that $x$ is not a fixed point for the action of the whole group $SU(2)$. Then, the isotropy group of $x$ has to be a 1-dimensional subgroup of $SU(2)$ containing the chosen maximal torus $S^1$. There are only two such subgroups, namely $S^1$ and its normalizer $N \cong O(2)$. The orbit of $x$ is then either $S^2 = S^1 \backslash SU(2)$ or $\mathbb R P^2 = N \backslash SU(2)$. For both cases, we get that the weight of $V_x$ corresponding to the plane tangential to this orbit has to be equal to $1$. So we have seen that in all cases one of the weights has to be equal to $\pm1$ or $\pm 2$. 

We now study the values of $s^{\ast} \kappa_{ep_i} \in H^{4i}(BSU(2);\MQ)$. For this, let us recall that by Lemma \ref{pullbackformula}, we get that the composite map $t\colon BS^1 \to BSU(2) \to B\Diff(W)$ satisfies $t^{\ast} \kappa_{ep_i} = \chi(M) \sigma_i(a_1^2, \dots, a_n^2)\gamma^{2i} \in H^{4i}(BS^1;\MQ).$
Since the map on cohomology $H^{\ast}(BS^1;\MQ) \leftarrow H^{\ast}(BSU(2);\MQ)$ induced from the inclusion $S^1 \subset SU(2)$ is injective and sends $c_2$ to $\gamma^2$, we deduce that the map $s\colon BSU(2) \to B\Diff(W)$ satisfies $s^{\ast} \kappa_{ep_i} = \chi(W) b_ic_2^i \in H^{4i}(BSU(2);\MQ)$ with $b_i = \sigma_i(a_1^2, \dots, a_n^2)$. Up to sign, the numbers $b_i, 1 \leq i \leq n$ are the coefficients of the monic polynomial $q(t) \in \MZ[t]$ of degree $n$ with roots $a_i^2, 1 \leq i \leq n$. If all of them were divisible by an odd prime $p$, it would follow that $q(t) \equiv t^n \ (\text{mod }p)$. Since $t^n$ does not have $\pm 1$ or $\pm 2$ as roots in $\MZ/p\MZ$, this is a contradiction. Hence no odd prime divides all of the numbers $b_i$ and thus their greatest common divisor is a power of $2$.
\end{proof}

\section{Infinitely many smooth bundles over $BSU(2)$}\label{infinitely}

The purpose of this section is to prove that for a fixed manifold $W$, $G = SU(2)$ and $\alpha \in H^{\ast}(B\Diff(W);\MQ)$, it is possible that the map 
\begin{equation*}
D^{G,W}_{\alpha} \colon \Hom(G,\Diff(W))/ \Diff(W) \to H^{\ast}(BG;\MQ).  
\end{equation*}
from the introduction has an infinite image. As a consequence, we obtain that there are infinitely many different kinetic manifold bundles over $BSU(2)$ with the same fiber $W$. We will only consider the case $W = S^2 \times S^2$ which is sufficient to exhibit the described phenomenon.
\begin{Pro}
For any even integer $k \geq 0$, there exists a smooth action of $SU(2)$ on $S^2 \times S^2$ such that the induced map \emph{$s_k\colon BSU(2) \to B\Diff(S^2 \times S^2)$} satisfies $s_k^{\ast}\kappa_{ep_1} = 4(k^2 + 1)c_2$, where $c_2 \in H^4(BSU(2);\MZ)$ denotes the second Chern class. 
\end{Pro}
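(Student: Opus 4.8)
The plan is: for each even $k$, write down an explicit smooth $SU(2)$-action on a four-manifold that one recognises as $S^2 \times S^2$, restrict the action to a maximal torus $T = S^1 \subset SU(2)$, identify the fixed locus and its tangential weights, feed these into Lemma \ref{pullbackformula}, and transport the resulting class in $H^\ast(BT;\MQ)$ back to $H^\ast(BSU(2);\MQ)$.

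Constructing the actions is the first (and I expect the main) task. One wants, for each even $k$, a smooth $SU(2)$-action on $S^2 \times S^2$ whose restriction to $T$ has fixed locus a disjoint union of two $2$-spheres with normal bundles carrying $T$-rotations of weights $k+1$ and $k-1$. A natural way to produce such actions is to realise $S^2 \times S^2$ as an appropriate $S^2$-bundle carrying an equivariant structure that depends on a degree-$k$ datum (built from the standard $SU(2)$-action on $\MP^1 = \MP(V_{1/2})$ together with an equivariant line bundle): the parity of $k$ is exactly what decides whether the underlying smooth total space is $S^2 \times S^2$ or $\mathbb{CP}^2 \# \overline{\mathbb{CP}^2}$. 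I would confirm the diffeomorphism type from the classification of smooth $S^2$-bundles over $S^2$ (equivalently by comparing intersection forms), and I would read off the fixed-point data of the $T$-action from the local slice model; the hard part is arranging the two normal weights so that they differ by $2$ and have mean $k$.

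Granting this, the rest is bookkeeping with Lemma \ref{pullbackformula}. Write $t_k \colon BT \to BSU(2) \xrightarrow{\,s_k\,} B\Diff(S^2 \times S^2)$ for the composite. Taking $c = p_1 \in \CB^{(4)}$, so that $\sigma_{p_1}(a_1,a_2) = \sigma_1(a_1^2,a_2^2) = a_1^2 + a_2^2$, and noting that the two fixed-point components are $2$-spheres with $\chi(S^2) = 2$ and tangential weight lists $(0,k+1)$ and $(0,k-1)$, the lemma gives
\[
t_k^\ast\kappa_{ep_1} \;=\; \bigl(2\,(k+1)^2 + 2\,(k-1)^2\bigr)\,\gamma^2 \;=\; 4(k^2+1)\,\gamma^2 \;\in\; H^4(BT;\MQ).
\]
Finally, as recalled in the proof of Theorem A, the restriction $H^\ast(BSU(2);\MQ) \to H^\ast(BT;\MQ)$ is injective and sends $c_2$ to $\gamma^2$; since $s_k^\ast\kappa_{ep_1}$ is a rational multiple of $c_2$ and restricts to $t_k^\ast\kappa_{ep_1} = 4(k^2+1)\gamma^2$, it must equal $4(k^2+1)c_2$, which is the claim.
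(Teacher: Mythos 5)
Your overall scheme---build an equivariant $S^2$-bundle over $S^2$ from a degree-$k$ twist, restrict to a maximal torus $T\subset SU(2)$, feed the fixed-point data into Lemma \ref{pullbackformula}, and transfer back along the injection $H^{\ast}(BSU(2);\MQ)\to H^{\ast}(BT;\MQ)$---is exactly the paper's, but the step you explicitly defer (``the hard part is arranging the two normal weights so that they differ by $2$ and have mean $k$'') is not merely missing: it cannot be carried out. You posit that the $T$-fixed locus consists of two pointwise fixed $2$-spheres with normal rotation weights $k+1$ and $k-1$. For an action of the full group $SU(2)$ this is impossible once $k\geq 2$: at a point $x$ of such a sphere the stabilizer contains $T$, so either $x$ is a global $SU(2)$-fixed point, or its orbit is $SU(2)/T\cong S^2$ or $SU(2)/N(T)\cong \Reals P^2$. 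If the whole sphere were globally fixed, $SU(2)$ would act fiberwise on its $2$-plane normal bundle; since $SU(2)$ is perfect, every homomorphism $SU(2)\to SO(2)$ is trivial, forcing normal weight $0$. Otherwise the tangent plane to the $2$-dimensional orbit at $x$ is a $T$-invariant plane of $T_xW$ on which $T$ acts nontrivially, hence it must coincide with the unique nonzero-weight plane (your normal plane), whose weight is then the isotropy weight of $SU(2)/T$ resp.\ $SU(2)/N(T)$---a fixed small constant independent of $k$ (cf.\ the orbit analysis in the proof of Theorem A). For every even $k\geq 2$ at least one of $k\pm 1$ exceeds this constant, so no $SU(2)$-action on $S^2\times S^2$ has the fixed data you assume; your arithmetic $2(k+1)^2+2(k-1)^2=4(k^2+1)$ reproduces the stated answer only by coincidence.

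What the natural construction actually produces---and what the paper computes with---is four \emph{isolated} $T$-fixed points. Take the oriented plane bundle $\xi\to S^2$ of Euler number $k$, identify its unit circle bundle with $SU(2)/(\MZ/k\MZ)$ so that $SU(2)$ acts by left translation, extend the action by scaling, and double the unit disk bundle; for $k$ even the classifying map $S^2\to BSO(2)\to BSO(3)$ is null, so the resulting $S^2$-bundle over $S^2$ is trivial and its total space is $S^2\times S^2$. The restricted $T$-action rotates the base and the fibers over its two base fixed points, so the fixed set is the four poles of those two fiber spheres, with weights $(\pm k,\pm 1)$ in the paper's conventions; Lemma \ref{pullbackformula} then gives $\bigl(4\cdot 1\cdot(k^2+1)\bigr)\gamma^{2}$ directly, and the final transfer to $H^4(BSU(2);\MQ)$ is as you say. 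To repair your write-up you should replace the fixed-sphere ansatz by this slice computation at the four isolated fixed points; as it stands, the existence statement at the heart of the proposition is unproved and the proposed fixed-point configuration is unrealizable.
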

\begin{proof} Let $k \geq 0$ be even. Two-dimensional oriented real vector bundles over $S^2$ are classified by 
$\pi_2 BSO(2) \cong \MZ,$ and this number coincides with the Euler number of such a bundle. Consider the unique two-dimensional oriented real vector bundle $\xi$ over $S^2$ of Euler number $k$. If it is endowed with a Euclidean metric, the $S^1$-bundle of unit vectors can be identified with $SU(2) / (\MZ/k\MZ)$. It thus receives an action of $SU(2)$, via multiplication on the left. This $SU(2)$-action extends by scaling to the whole vector bundle, where the action on the zero section is given by left translation on $SU(2)/S^1 \cong S^2$. It is clear that this action is smooth near the zero section. Let us now consider the double of the unit disk bundle, which is a smooth $S^2$-bundle over $S^2$ and its total space $E$ also receives an action of $SU(2)$. It is classified by the composition $S^2 \to BSO(2) \to BSO(3),$ which is null since $k$ was assumed to be even, hence the bundle is trivial. In particular, its total space $E$ is diffeomorphic to $S^2 \times S^2$. Finally, for a maximal torus $S^1 \subset SU(2)$, the set of fixed points $E^{S^1}$ consists of $4$ points, and the weights of the normal representations at these points are $(\pm k, \pm 1)$. By Lemma \ref{pullbackformula}, this implies that the corresponding composite map $t\colon BS^1 \to BSU(2) \to B\Diff(S^2 \times S^2)$ satisfies $t^{\ast} \kappa_{ep_1} = 4(k^2 + 1)\gamma^{2} \in H^4(BS^1;\MQ).$ Since the isomorphism $H^4(BS^1; \MQ) \leftarrow H^4(BSU(2); \MQ)$ sends $c_2$ to $\gamma^2$, this implies the assertion.
\end{proof}

The preceding proposition is relevant for two different reasons. Firstly, it shows that the rather arithmetic nature of our main theorem is not just a mere coincidence---a result constraining $s^{\ast} \kappa_{c}$ for $s \colon BSU(2) \to B\Diff(W)$ induced by an action cannot just give a bound on all possible values that might arise. Secondly, it implies the following result.
\begin{Cor}\label{InfinitelyMany} There are infinitely many non-equivalent kinetic smooth $S^2 \times S^2$-bundles over $BSU(2)$.
\end{Cor}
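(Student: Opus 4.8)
\textbf{Proof proposal for Corollary \ref{InfinitelyMany}.}
The plan is to use the family of actions produced by the preceding proposition and the fact that pulled-back tautological classes are invariants of the isomorphism type of a bundle. For each even integer $k \geq 0$, let $\pi_k \colon E_k \to BSU(2)$ be the smooth $S^2 \times S^2$-bundle obtained by pulling back the universal bundle along the map $s_k \colon BSU(2) \to B\Diff(S^2 \times S^2)$ from the proposition. By construction each $E_k$ is induced by a smooth action of $SU(2)$ on $S^2 \times S^2$, hence lies in the image of $B$ and is therefore kinetic. So it remains only to extract an infinite subfamily of pairwise non-isomorphic bundles from $\{E_k\}$.

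The key observation is that $\kappa_{ep_1} \in H^4(B\Diff(S^2 \times S^2);\MQ)$ is a characteristic class of smooth $S^2 \times S^2$-bundles: if $E_k$ and $E_{k'}$ are isomorphic as smooth $W$-bundles over $BSU(2)$, then their classifying maps $s_k$ and $s_{k'}$ are homotopic, and consequently $s_k^{\ast}\kappa_{ep_1} = s_{k'}^{\ast}\kappa_{ep_1}$ in $H^4(BSU(2);\MQ)$. By the proposition, $s_k^{\ast}\kappa_{ep_1} = 4(k^2+1)c_2$, and since $c_2$ is a non-torsion generator of $H^4(BSU(2);\MQ)$ (indeed $H^{\ast}(BSU(2);\MQ) \cong \MQ[c_2]$), the equality $4(k^2+1)c_2 = 4(k'^2+1)c_2$ forces $k^2 = k'^2$, hence $k = k'$ as both are non-negative. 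Therefore the bundles $E_0, E_2, E_4, \dots$ are pairwise non-isomorphic, and this is an infinite family of kinetic smooth $S^2 \times S^2$-bundles over $BSU(2)$, proving the corollary.

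I do not anticipate a genuine obstacle here; the only point requiring a word of care is the assertion that an isomorphism of smooth $W$-bundles over a fixed base induces homotopic classifying maps, which is immediate from the fact that such bundles are classified by $[BSU(2), B\Diff(W)]$ (using a paracompact model of $BSU(2)$, as arranged in the introduction). Everything else is a direct application of the preceding proposition together with the structure of $H^{\ast}(BSU(2);\MQ)$.
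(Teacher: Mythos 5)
Your argument is correct and is exactly the one the paper intends: the bundles induced by the actions of the preceding proposition are kinetic by construction, and since $\kappa_{ep_1}$ pulls back to $4(k^2+1)c_2$, naturality under bundle isomorphism (homotopic classifying maps) distinguishes the bundles for distinct even $k$. No gaps; this matches the paper's (implicit) proof.
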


\section{Non-kinetic manifold bundles over BSU(2)}

In this section we prove Theorem B. 
We first show that its assumptions are satisfied for the manifold $W_g$ of dimension $2n$ if $n \geq 3$ is odd and $g > 1$. The only part that needs to be explained  is the following.
\begin{Pro} \label{compliant}
For any $n \geq 3$ and $g \geq 1$, the manifold $W_g = \#^g(S^n \times S^n)$ admits a non-trivial smooth action of $SU(2)$.
\end{Pro}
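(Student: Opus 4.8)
The plan is to construct an explicit smooth $SU(2)$-action on $W_g = \#^g(S^n \times S^n)$ for every $n \geq 3$ and $g \geq 1$ by equipping the connected sum with a suitable $SU(2)$-structure that fixes the connect-sum points. First I would note that $SU(2)$ acts smoothly on $S^n$ for every $n \geq 3$: indeed, $SU(2) = \Spin(3)$ acts on $\Reals^3$ via the double cover $SU(2) \to SO(3)$, hence on $S^2$, and more generally, choosing any non-trivial real representation $V^{n+1}$ of $SU(2)$ of dimension $n+1$ (which exists whenever $n+1$ is odd or divisible by $4$, by the representation theory recalled in Section 5, and in the remaining cases one takes a reducible $V^{n+1}$ with a trivial summand) gives a linear, hence smooth, action of $SU(2)$ on $S^n = S(V^{n+1})$. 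This action has at least two fixed points (one can always arrange a trivial $\Reals$-summand, or pick an eigenvector), say the north and south poles.

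Next, I would build the action on $S^n \times S^n$ so that it has a fixed point with a local chart, and then perform equivariant connected sum. Concretely, take the diagonal action of $SU(2)$ on $S^n \times S^n$ using the action just described on each factor; then $(N,N)$, with $N$ the north pole, is a fixed point, and the tangential representation there is $2$ copies of the tangential representation of $SU(2)$ on $S^n$ at $N$. Now $W_g$ is obtained from $g$ copies of $S^n \times S^n$ by iterated connected sum; to make this equivariant, I would use the standard fact that if $G$ acts smoothly on manifolds $X_1, X_2$ with fixed points $x_i \in X_i$ such that the tangential representations $T_{x_1}X_1$ and $T_{x_2}X_2$ are isomorphic as $G$-representations, then one can form an equivariant connected sum $X_1 \# X_2$ carrying a smooth $G$-action: remove invariant disk neighborhoods of $x_1$ and $x_2$ and glue along the boundary spheres via a $G$-equivariant orientation-reversing diffeomorphism, which exists because both boundaries are $G$-equivariantly diffeomorphic to the unit sphere in the common representation. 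Since in our case all the $X_i$ are the same manifold $S^n \times S^n$ with the same fixed point and the same tangential representation, iterating this produces a smooth $SU(2)$-action on $W_g$.

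Finally I would check that the resulting action is non-trivial. This is clear because the action on each $S^n \times S^n$ summand was non-trivial, and the equivariant connected sum only alters the manifold near the fixed points, leaving the action on the complement (where it acts non-trivially) intact; alternatively, the tangential representation at the surviving fixed point is a non-trivial $SU(2)$-representation, so the action cannot be trivial.

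The main obstacle I anticipate is making the equivariant connected sum construction precise — specifically, verifying that invariant tubular neighborhoods of the fixed points exist (this follows from the existence of an invariant Riemannian metric, obtained by averaging over the compact group $SU(2)$, together with the equivariant tubular neighborhood theorem), and that the gluing diffeomorphism between the two boundary spheres can be chosen $SU(2)$-equivariantly and orientation-reversingly. The latter reduces to the statement that for a fixed real representation $V$ of $SU(2)$, the unit sphere $S(V)$ admits an equivariant orientation-reversing self-diffeomorphism whenever $\dim V$ is even (which holds here since $\dim V = n+1$, though one must be slightly careful about parity — if $n+1$ is odd one uses instead that $-\id$ on $V$ is orientation-reversing and equivariant, and if $n+1$ is even one composes with a reflection in a trivial summand or uses $-\id$ directly), so no genuine obstruction arises. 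Everything else is a routine verification.
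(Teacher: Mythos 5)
Your construction is the same in spirit as the paper's: put a linear $SU(2)$-action on $S^n$ (the paper uses $\Reals^3\oplus\Reals^{n-2}$ with $SU(2)$ acting through $SO(3)$ on the first summand and trivially on the rest), extend it to $S^n\times S^n$, and then form an equivariant connected sum of $g$ copies at fixed points. The only real difference is that the paper lets $SU(2)$ act on the first $S^n$ factor only and trivially on the second, so the fixed set is $S^{n-3}\times S^n$ and the tangential representation at any fixed point contains a trivial summand of dimension at least $n$; this makes the equivariant gluing step painless.

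The step where your write-up goes wrong is the justification of the equivariant orientation-reversing gluing map. The representation that matters is the tangential representation at $(N,N)$, which is $2n$-dimensional (for your diagonal action it is $(\Reals^3\oplus\Reals^{n-3})^{\oplus 2}$), not $(n+1)$-dimensional, and ``even-dimensional'' is not a sufficient criterion for an equivariant orientation-reversing self-diffeomorphism of the representation sphere: for $V=\MH$ the unit sphere is $SU(2)$ with its left-translation action, and every equivariant self-diffeomorphism is a right translation, hence orientation-preserving. For $n\geq 4$ a reflection in a trivial coordinate works, but for $n=3$ your tangential representation $\Reals^3\oplus\Reals^3$ (diagonal rotation action) has no trivial summand and $-\id$ is orientation-preserving in dimension $6$, so neither of the maps you propose applies; what does work is the factor swap $(u,v)\mapsto(v,u)$, which is equivariant and has determinant $(-1)^9=-1$. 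Alternatively, the orientation question can be sidestepped entirely: since $S^n\times S^n$ admits an orientation-reversing self-diffeomorphism, any equivariant gluing of the boundary spheres yields a manifold diffeomorphic to $W_g$. Finally, the parenthetical ``or pick an eigenvector'' for producing fixed points on $S(V^{n+1})$ is not correct -- $SU(2)$ has no non-trivial characters, so an invariant line is necessarily a trivial summand -- but your alternative of arranging a trivial $\Reals$-summand is exactly what the paper does and is all you need.
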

\begin{proof} (Compare \cite[Section 4.1]{Ilya2}) Let $V = \mathbb R^3$ on which $SU(2)$ acts via the two-fold covering map $SU(2) \to SO(3)$ and the natural action of $SO(3)$ on $\Reals^3$. Furthermore, let $\Reals^{n-2}$ be endowed with a trivial action of $SU(2)$. Then $S^n$ can be identified with the unit sphere in $V \oplus \Reals^{n-2}$ and thus inherits a smooth action of $SU(2)$. This also gives a smooth action of $SU(2)$ on $S^n \times S^n$, by acting as described on the first factor and trivially on the second. The set of global fixed points of this action is $S^{n-3} \times S^n \neq \emptyset$. Now we can equivariantly glue $g$ copies together along fixed points to obtain a non-trivial smooth action of $SU(2)$ on $W_g$.
\end{proof}

To prove Theorom B, we recall a result originally due to Sullivan, see \cite[Corollary 5.10 on page 165]{Sullivan}, which was later refined by Mislin, see \cite{Mislin}.
\begin{Thm}[Sullivan-Mislin] \label{hpinfty} The map $[BSU(2),BSU(2)] \to \MZ$ given by looping a map and then taking the degree of the self-map of $\Omega BSU(2) \simeq S^3$ is injective and the image is $\{0\} \cup \{\text{odd squares}\}$. 
\end{Thm}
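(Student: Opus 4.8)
This is a classical theorem; we indicate the structure of a proof. Write $X = \MH P^{\infty} = BSU(2)$, so that $H^{\ast}(X;\MZ) = \MZ[c_2]$ with $|c_2| = 4$ and $\Omega X \simeq SU(2) \simeq S^3$. In the path--loop fibration $S^3 \simeq \Omega X \to PX \to X$ the transgression $d_4\colon H^3(S^3;\MZ) \to H^4(X;\MZ)$ is an isomorphism (forced by contractibility of $PX$), and we normalise $c_2$ so that it is the transgression of the fundamental class. Naturality of the transgression for the map of fibrations induced by a self-map $f\colon X \to X$ then gives $f^{\ast}c_2 = \deg(\Omega f)\cdot c_2$. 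Hence the map of the statement is $f \mapsto (f^{\ast}c_2/c_2) \in \MZ$, and it suffices to show that this map is injective with image $\{0\} \cup \{\text{odd squares}\}$.

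\emph{Realisation.} The constant map realises $0$ and the identity realises $1$. For an odd integer $m>1$ one constructs an unstable Adams operation $\psi^m\colon X \to X$, required on the subspace $BS^1 \subset X$ to be the $m$-th power map, which sends $\gamma \mapsto m\gamma$ and hence $c_2|_{BS^1} = \gamma^2$ to $m^2\gamma^2$; since $H^{\ast}(X;\MQ) \to H^{\ast}(BS^1;\MQ)$ is injective, such a map has degree $m^2$. One produces $\psi^m$ one prime at a time: at every prime $p$ there is a $p$-complete self-map of degree $m^2$, the only delicate case being $p=2$, where the construction requires $m$ to be a unit in $\MZ_2$, i.e.\ to be odd. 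These are assembled, together with the rational map $c_2 \mapsto m^2 c_2$, via the arithmetic fracture square, which applies since $X$ is simply connected of finite type.

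\emph{The image is contained in $\{0\}\cup\{\text{odd squares}\}$.} Suppose $f^{\ast}c_2 = d\,c_2$ with $d \neq 0$. The induced ring map $f^{\ast}$ on $K^0(X) = \widehat{R(SU(2))}$ corresponds, under the Chern character isomorphism $K^0(X)\otimes\MQ \cong \MQ[[c_2]]$, to $c_2 \mapsto d\,c_2$ on $H^{\ast}(X;\MQ)$. Writing $\eta = [H]-2$ for the reduced tautological class, one has $\ch(\eta) = 2\cosh(\sqrt{-c_2})-2 = -c_2 + c_2^2/12 - \cdots$; requiring $f^{\ast}\eta$ and the powers $f^{\ast}(\eta^{\,j})$ to be integral imposes for each $j$ a divisibility condition on $d$ (the first two being $12 \mid d(d-1)$ and $360 \mid d(d-1)(d-4)$), and the conjunction of all of these is equivalent to $d$ being a perfect square --- every square works because the relevant Chebyshev/Dickson polynomials have integer coefficients. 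To upgrade ``square'' to ``odd square'' one argues $2$-locally: a self-map of $X$ whose degree is an even square would, after $2$-completion and restriction to a low skeleton, produce a self-map of a (stunted) quaternionic projective space realising an even degree on its bottom cohomology class, which is obstructed by a secondary cohomology operation; this $2$-primary computation is precisely the content of the refinements of Sullivan and Mislin.

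\emph{Injectivity.} Two self-maps with the same degree induce the same map on rational cohomology and, after $p$-completion, the same map on every skeleton of $X$; the residual obstructions to a homotopy between them lie in groups assembled from the finite groups $\pi_{4k-1}(S^3)$ for $k\geq 2$, and one checks prime by prime --- again using the arithmetic square --- that these vanish. The main obstacle throughout is the $2$-primary analysis in the Realisation and Constraint steps: away from the prime $2$ everything is governed by the transparent structure of $X_p$ and by Chern-character integrality, whereas the exact answer ``odd squares'' --- in particular the exclusion of even squares such as $4$, which is permitted by $K$-theory --- is a genuinely $2$-local phenomenon.
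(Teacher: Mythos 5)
First, note that the paper does not prove this statement at all: it is imported as a classical result with references to Sullivan's notes (Corollary 5.10) and Mislin's paper, so there is no in-paper argument to compare against. Judged on its own terms, your sketch correctly reproduces the standard framing: the degree is $f^{\ast}c_2/c_2$ via transgression in the path--loop fibration; realisation of odd squares goes through unstable Adams operations built $p$-locally and glued by the arithmetic fracture square; complex $K$-theory integrality constrains the degree; and the numerics you quote are right (the conditions $12 \mid d(d-1)$ and $360 \mid d(d-1)(d-4)$ are exactly what matching the $c_2^2$ and $c_2^3$ terms of $\ch$ gives, and $d=4$ indeed passes every Chern-character test since $2\cosh(2t)-2 = w^2+4w$ for $w = 2\cosh(t)-2$, so the exclusion of even squares is genuinely $2$-local).

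However, as a proof the proposal has genuine gaps precisely at the two points that constitute the theorem. (1) The exclusion of even squares is only asserted ("obstructed by a secondary cohomology operation \dots this is precisely the content of the refinements of Sullivan and Mislin"); deferring the decisive $2$-primary step to the very sources being proved makes the argument circular rather than complete. Likewise the claim that the full list of integrality conditions is equivalent to $d$ being a perfect square is stated in one direction only (squares work via Dickson polynomials); the converse, that a non-square fails some condition --- equivalently that $d$ must be a square in every $\MZ_p$ --- is not argued. (2) Injectivity is not a routine obstruction count. Since $\MH P^{\infty}$ is an infinite complex, two maps of equal degree must be compared through the whole skeletal tower, and the obstruction groups $H^{4k}(\MH P^{\infty};\pi_{4k-1}(S^3))$ are finite but nonzero (e.g.\ $\pi_7(S^3) = \MZ/2$), so "one checks prime by prime that these vanish" is exactly the nontrivial content (including the absence of phantom maps and the $\varprojlim^1$ issues) that Mislin's paper supplies; no mechanism for that vanishing is given. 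So the proposal is an accurate roadmap of the Sullivan--Mislin proof, but the key steps are cited rather than proved.
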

For $k \geq 1$ odd, the (up to homotopy uniquely defined) map $\psi \colon BSU(2) \to BSU(2)$ such that $\deg(\Omega \psi) = k^2$ is denoted by $\psi_k$. Sullivan calls these maps generalized Adams-operations; they relate to the more familiar Adams-operations in topological $K$-theory. It is clear that $\psi_k$ induces multiplication by $k^{2i}$ on $H^{4i}(BSU(2);\MQ) \cong \MQ$. We now prove Theorem B.

\begin{proof}
Let $W$ be as required by the assumptions. In particular, we assume that there exists a non-trivial smooth action of $SU(2)$ on $W$. From Theorem A, we know that the corresponding map $s\colon BSU(2) \to B\Diff(W)$ satisfies $s^{\ast} \kappa_{ep_i} = b_i\chi(W)c_2^{i},$ for integers $b_1, \dots, b_n$ whose greatest common divisor is a power of $2$. Now for any odd prime $p$, we can consider the corresponding Adams-operation $\psi_p\colon BSU(2) \to BSU(2)$. The composite map $f = s \circ \psi_p \colon BSU(2) \to B\Diff(W)$ then satisfies $f^{\ast} \kappa_{ep_i} = (\psi_p^{\ast} \circ s^{\ast}) (\kappa_{ep_i}) = p^{2i} s^{\ast}\kappa_{ep_i} = p^{2i}b_i\chi(W)c_2^{i},$ for $1 \leq i \leq n$. It now follows from Theorem A that $f$ is not induced by an action of $SU(2)$ on $W$, i.e., the smooth $W$-bundle over $BSU(2)$ corresponding to $f$ is non-kinetic.
\end{proof}

\section{Some open questions}\label{Open}
One might wish to go beyond Theorem B by changing the group or the manifold under consideration. The circle is often considered to be the most fundamental Lie group, so it would be interesting to answer the following question. 

\begin{Que} Does there exist a non-kinetic smooth manifold bundle over $BS^1$? If so, for which smooth manifolds $W$ as fibers is this possible?
\end{Que}

The author will refrain from speculating whether such bundles exist. However, the following proposition provides a negative answer for surfaces.

\begin{Pro} For a closed connected oriented surface $\Sigma$, every smooth $\Sigma$-bundle over $BS^1$ is kinetic.
\end{Pro}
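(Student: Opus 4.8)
The plan is to distinguish three cases according to the diffeomorphism type of the closed connected oriented surface $\Sigma$ — the sphere $S^2$, the torus $T^2$, and a surface $\Sigma_g$ of genus $g\ge 2$ — and in each case to identify the homotopy type of $B\Diff(\Sigma)$ precisely enough to classify smooth $\Sigma$-bundles over $BS^1$ and to recognize each of them as lying in the image of $B$. Two of the cases are essentially formal and rely on the fact that $BS^1=\mathbb{CP}^\infty$ is simply connected. For $g\ge 2$ the identity component $\Diff_0(\Sigma_g)$ is contractible by the theorem of Earle and Eells, so $B\Diff(\Sigma_g)\simeq B\Gamma_g$ for the discrete mapping class group $\Gamma_g$; since $B\Gamma_g$ is aspherical, every map $BS^1\to B\Gamma_g$ is null-homotopic, so the only smooth $\Sigma_g$-bundle over $BS^1$ is the trivial one, which is $B$ of the trivial action and hence kinetic. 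For $\Sigma=T^2$ the translation subgroup is a deformation retract of $\Diff_0(T^2)$ and $\pi_0\Diff(T^2)\cong \mathrm{SL}_2(\MZ)$, giving a fibration sequence $BT^2\simeq B\Diff_0(T^2)\to B\Diff(T^2)\to B\mathrm{SL}_2(\MZ)$; a map $BS^1\to B\Diff(T^2)$ becomes null in $B\mathrm{SL}_2(\MZ)$ and therefore lifts, up to homotopy, through the fibre inclusion $BT^2\hookrightarrow B\Diff(T^2)$. As $[BS^1,BT^2]=H^2(\mathbb{CP}^\infty;\MZ)^{\oplus 2}=\MZ^2$ and every class is $B$ of a homomorphism $S^1\to T^2$, composing with the inclusion of the translations $T^2\hookrightarrow\Diff(T^2)$ realizes the bundle as one induced by a translation action, so it is kinetic.

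The remaining and substantial case is $\Sigma=S^2$. By Smale's theorem $\Diff(S^2)\simeq SO(3)$, so a smooth $S^2$-bundle over $BS^1$ is a map $f\colon BS^1\to BSO(3)$, i.e.\ an oriented real $3$-plane bundle over $\mathbb{CP}^\infty$, and up to conjugacy the homomorphisms $S^1\to SO(3)$ are the rotations $z\mapsto z^a$ ($a\ge 0$) about a fixed axis, with associated bundle $\mathbb{R}\oplus\ell_a$. First I would lift $f$ along the fibration $BU(2)\to BPU(2)=BSO(3)$, a $K(\MZ,2)$-fibration (its fibre is $BU(1)$) classified by $W_3\in H^3(BSO(3);\MZ)$: the obstruction $f^\ast W_3$ lies in $H^3(\mathbb{CP}^\infty;\MZ)=0$, so a lift $\tilde f\colon BS^1\to BU(2)$ exists, i.e.\ the bundle carries a $\Spin^c$-structure represented by a rank-$2$ complex bundle $\mathcal W$ over $\mathbb{CP}^\infty$. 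Next I would invoke the classical fact that $[BS^1,BU(2)]$ consists exactly of the classes of homomorphisms $S^1\to U(2)$ — equivalently, that every rank-$2$ complex bundle over $\mathbb{CP}^\infty$ is a sum of line bundles — so that $\mathcal W\cong L_c\oplus L_d$ and $\tilde f=B\sigma$ for $\sigma(z)=\mathrm{diag}(z^c,z^d)$. Then $f\simeq B\mathfrak p\circ\tilde f=B(\mathfrak p\circ\sigma)$, where $B\mathfrak p\colon BU(2)\to BSO(3)$ is induced by the projection $\mathfrak p\colon U(2)\to PU(2)=SO(3)$; since $\mathfrak p\circ\sigma\colon S^1\to SO(3)$ is a homomorphism (conjugate to rotation by $z\mapsto z^{c-d}$, with associated bundle $\mathbb R\oplus\ell_{c-d}$), it induces a genuine linear circle action on $S^2=\mathbb P(\MC^2)$, so $f$ is classified by $B$ of an action and the bundle is kinetic. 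Assembling the three cases yields the proposition.

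The main obstacle is exactly the classical input used in the $S^2$-case: that $[BS^1,BU(2)]$ — equivalently $[BS^1,BSO(3)]$ — contains no ``exotic'' classes beyond those of homomorphisms. This is not a formality, since over $\mathbb{CP}^n$ for finite $n$ there do exist indecomposable rank-$2$ complex bundles, and the content is that none of them survives to the colimit $\mathbb{CP}^\infty$. I would either cite this from the literature on maps between classifying spaces of compact Lie groups (where $[BT,BG]\cong\mathrm{Rep}(T,G)$ for a torus $T$, cf.~\cite{Notbohm}), or establish it via the arithmetic fracture square for $BU(2)$: rationally $[\mathbb{CP}^\infty,BU(2)]_\MQ$ is detected by $(c_1,c_2)$, $p$-adically the maps $(BS^1)^\wedge_p\to (BU(2))^\wedge_p$ are classified by $p$-adic weights, and gluing forces $c_1^2-4c_2$ to be a square in $\MQ_p$ for every $p$; combined with the elementary number-theoretic fact that an integer which is a square in every $\MQ_p$ is a perfect square, this gives $c_1^2-4c_2=k^2$, i.e.\ $\mathcal W$ splits as a sum of line bundles.
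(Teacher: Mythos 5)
Your treatment of the genus $\ge 2$ and torus cases matches the paper's: Earle--Eells makes $B\Diff(\Sigma_g)$ aspherical so every map from the simply connected $BS^1$ is null, and for $T^2$ both arguments lift through the fibre $BT^2$ of $B\Diff(T^2)\to B\pi_0$ and then realize the lift by a homomorphism $S^1\to T^2$ (the paper invokes Dwyer--Zabrodsky here, whereas your direct computation $[BS^1,BT^2]\cong\MZ^2$ is even more elementary and perfectly adequate). The genuine divergence is the $S^2$ case: the paper simply combines Smale's equivalence $\Diff(S^2)\simeq SO(3)$ with the Dwyer--Zabrodsky/Notbohm theorem applied directly to $[BS^1,BSO(3)]$, while you first lift along $BU(2)\to BPU(2)\cong BSO(3)$ (legitimate, since the obstruction lives in $H^3(\mathbb{CP}^\infty;\MZ)=0$) and then invoke the splitting of rank-$2$ complex bundles over $\mathbb{CP}^\infty$ into line bundles. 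That detour is correct but buys little: the splitting statement is itself an instance of the same $[BT,BG]\cong\mathrm{Rep}(T,G)$ circle of results you would cite from \cite{Notbohm}, so the paper's route is shorter and uses one theorem instead of two steps. One caution about your proposed self-contained alternative via the arithmetic fracture square: showing $c_1^2-4c_2$ is a perfect square only matches Chern classes, and Chern classes do not a priori determine a homotopy class of maps $\mathbb{CP}^\infty\to BU(2)$; to conclude $\mathcal W\cong L_c\oplus L_d$ you would need to run the fracture argument at the level of the mapping space (rational and $p$-complete classification plus the gluing and $\lim^1$ issues), which is exactly the content of the Dwyer--Zabrodsky/Notbohm machinery, so in practice the citation route is the one to use.
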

\begin{proof} 
Smale proved that the inclusion $SO(3) \hookrightarrow \Diff(S^2)$ is a weak equivalence \cite{Smale}, and Dwyer--Zabrodsky showed that for any compact Lie group $H$ and any $p$-toral group $P$, the map $\Hom(P,G)/G \to [BP,BG]$ is bijective, see \cite{DwyerZabrodsky, Notbohm}. When both results are combined, it follows that all bundles $BS^1 \to B\Diff(S^2)$ are indeed kinetic, which settles the case $\Sigma \cong S^2$. In the case $\Sigma \cong T^2$, it follows from Smale's theorem that there is a homotopy fiber sequence $BT^2 \to B\Diff(T^2) \to B\GL_2(\MZ),$
see \cite{EE67}. By covering space theory, the map $BS^1 \to B\Diff(T^2)$ is homotopic to one that can be lifted to a map $BS^1 \to BT^2$. Now the conclusion follows again from the result of Dwyer-Zabrodsky. In the remaining case of higher genus, the Earle-Eells theorem states that the components of $\Diff(\Sigma _g)$ are contractible \cite{EE67}. Hence the space $B\Diff(S_g)$ is a $K(\Gamma_g,1)$, with $\Gamma_g = \pi_0 \Diff(\Sigma _g)$ the mapping class group. Thus, again by covering theory, every map $BS^1 \to B\Diff(S_g)$ is null. In particular, every surface bundle over $BS^1$ with fiber a surface of higher genus is trivial.
\end{proof}

Going in a different direction, one can also keep $G = SU(2)$, but enlarge the class of manifolds that are considered. The following conjecture seems plausible.

\begin{Con} Let $W$ be an arbitrary closed manifold and suppose $SU(2)$ acts smoothly and non-trivially on it. Let $s\colon BSU(2) \to B\Diff(W)$ be the induced map on classifying spaces. For $k > 1$ odd, the bundle classified by $s \circ \psi_k \colon BSU(2) \to B\Diff(W)$ is non-kinetic.
\end{Con}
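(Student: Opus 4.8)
The plan is to push the argument behind Theorem B beyond the range where Theorem A applies, by replacing Theorem A with a weaker but hypothesis-free constraint on the tautological classes of a genuine $SU(2)$-action. Fix a maximal torus $S^1\subset SU(2)$. For a smooth $SU(2)$-action inducing $s\colon BSU(2)\to B\Diff(W)$, restrict to $S^1$, write $W^{S^1}=M_1\amalg\cdots\amalg M_r$ for the fixed-point decomposition, and let $a_{i,1},\dots,a_{i,n}$ be the tangential weights along $M_i$. I would organize everything around the \emph{fixed-point measure}
\[
\nu_s=\sum_{i=1}^{r}\chi(M_i)\bigl[\{a_{i,1}^2,\dots,a_{i,n}^2\}\bigr],
\]
an element of the free abelian group on finite multisets of non-negative integers, and carry out the argument in three steps. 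The first step is that $\nu_s$ depends only on the bundle classified by $s$: by Lemma \ref{pullbackformula} together with the injectivity of $H^{\ast}(BSU(2);\MQ)\to H^{\ast}(BS^1;\MQ)$ (which sends $c_2$ to $\gamma^2$), the class $s^{\ast}\kappa_{ec}$ records the number $\sum_i\chi(M_i)\sigma_c(a_{i,1},\dots,a_{i,n})$; letting $c$ range over monomials in $p_1,\dots,p_{n-1}$ and $e^2$ produces $\sum_i\chi(M_i)P(a_{i,1}^2,\dots,a_{i,n}^2)$ for every symmetric polynomial $P$, and since symmetric polynomials separate multisets of size $n$, these finitely many numbers determine $\nu_s$.

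The second step records the effect of $\psi_k$. As $\psi_k^{\ast}$ is multiplication by $k^{2m}$ on $H^{4m}(BSU(2);\MQ)$ and each relevant $\sigma_c$ is homogeneous under rescaling the weights, one computes that $(s\circ\psi_k)^{\ast}\kappa_{ec}$ equals $\sum_i\chi(M_i)\sigma_c(ka_{i,1},\dots,ka_{i,n})$ times the relevant power of $c_2$; hence by Step~1 the fixed-point measure attached to the bundle $s\circ\psi_k$ is the entrywise rescaling of $\nu_s$ by $k^2$, namely $\sum_i\chi(M_i)[\{k^2a_{i,1}^2,\dots,k^2a_{i,n}^2\}]$. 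The third step is the substitute for Theorem A: for \emph{any} non-trivial smooth $SU(2)$-action, every multiset in the support of its fixed-point measure contains a $1$ or a $4$. This is the orbit-type dichotomy from the proof of Theorem A applied componentwise. No point $x\in M_i$ can have a trivial tangential $S^1$-representation, since then $M_i$ would be open and closed, hence all of $W$, and a trivially acting maximal torus forces the whole action to be trivial --- the kernel is a closed normal subgroup of $SU(2)$ containing $S^1$, hence all of $SU(2)$. So $x$ is either a global fixed point, where the non-trivial tangential $SU(2)$-representation has a weight $\pm1$ or $\pm2$ by Proposition \ref{weights}, or it has $SU(2)$-orbit $S^2$ or $\Reals P^2$, whose tangent plane is an $S^1$-subrepresentation of weight $\pm1$ or $\pm2$; in either case the corresponding multiset of squared weights contains a $1$ or a $4$.

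These three steps prove the conjecture whenever $\nu_s\neq0$. Indeed, if $s\circ\psi_k$ were kinetic, say induced by an action $A'$, then $A'$ is non-trivial --- its bundle has some non-vanishing tautological class, because the rescaled measure $\sum_i\chi(M_i)[\{k^2a_{i,1}^2,\dots\}]$ is non-zero when $\nu_s\neq0$ --- so by Step~3 every support-multiset of the fixed-point measure of $A'$ contains a $1$ or a $4$; but by Step~2 that measure is the $k^2$-rescaling of $\nu_s$, so every entry of every support-multiset is a non-zero multiple of $k^2\geq9$ (here $k>1$ is odd, and $\psi_k$ exists only for odd $k$ by Theorem \ref{hpinfty}), hence is never $1$ or $4$ --- a contradiction. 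This settles the conjecture for every $W$ with $\chi(W)\neq0$, since the total mass of $\nu_s$ equals $\chi(W^{S^1})=\chi(W)$ by Lemma \ref{circleactionbound}, so $\nu_s\neq0$ is automatic.

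The main obstacle is the remaining case $\chi(W)=0$ with $\nu_s=0$, whose extreme form is a non-trivial $SU(2)$-action for which the chosen maximal torus acts without fixed points (for instance left translation of $SU(2)=S^3$ on $W=S^3\times S^1$). In that situation $H^{\ast}_{S^1}(W;\MQ)$ is $\gamma$-torsion, so by equivariant localization \emph{every} rational tautological class $\kappa_c$ of $s$ vanishes --- and likewise for $s\circ\psi_k$. Tautological classes therefore cannot tell the two bundles apart, and a proof in this range would require an essentially new ingredient: a non-abelian equivariant analysis over the positive-dimensional isotropy strata, a characteristic class of $B\Diff(W)$ outside the tautological subring, or an obstruction to being kinetic of non-cohomological type, such as a rational-homotopy constraint on maps $BSU(2)\to B\Diff(W)$. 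I expect this $\chi(W)=0$ case to be where the real difficulty lies.
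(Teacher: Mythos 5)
First, a point of calibration: the statement you are addressing is stated in the paper as a \emph{conjecture}, not a theorem --- the paper offers no proof of it, and explicitly says it only knows the conclusion for the manifolds covered by Theorem B (rationally odd $W$ with $\chi(W)<0$, via Theorems A and B), leaving even $\#^g(S^n\times S^n)$ with $n$ even open. So there is no ``paper proof'' to match yours against; the question is whether your argument settles the conjecture, and it does not, as you yourself concede. What you do prove (essentially correctly, in my reading) is a genuine strengthening of Theorem B: your ``fixed-point measure'' $\nu_s$ is recoverable from the bundle because the numbers $\sum_i\chi(M_i)\sigma_c(a_{i,1},\dots,a_{i,n})$ of Lemma \ref{pullbackformula}, taken over monomials $c$ in $p_1,\dots,p_n$, realize all symmetric polynomials in the squared weights, and evaluation functionals at distinct multisets are linearly independent; combined with the orbit-type dichotomy from the proof of Theorem A (every weight multiset at an $S^1$-fixed component of a non-trivial $SU(2)$-action contains $\pm1$ or $\pm2$) and the $k^{2m}$-scaling of $\psi_k^{\ast}$, this rules out a kinetic structure on $s\circ\psi_k$ whenever $\nu_s\neq 0$, in particular whenever $\chi(W)\neq 0$, with no rationally-odd hypothesis. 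That is more than the paper's Theorem A/B route gives, since the paper needs connectivity of the fixed set (hence the Betti-number hypotheses) to run its gcd argument on a single weight multiset, whereas your measure formalism tolerates disconnected fixed sets and cancellation. One small patch: your assertion that $A'$ must be non-trivial ``because the rescaled measure is non-zero'' is too quick --- a non-zero measure concentrated on the all-zero multiset (which is exactly what a trivial action produces when $\chi(W)\neq0$) has vanishing positive-degree $\kappa$'s; you need Step 3 applied to the original action $s$ to see that the support of $\nu_s$, hence of its rescaling, avoids the zero multiset, after which triviality of $A'$ is excluded (or simply note that the trivial action's measure cannot equal the rescaled one). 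Likewise ``every entry is a non-zero multiple of $k^2$'' should read ``no entry equals $1$ or $4$,'' since zero weights occur along positive-dimensional fixed components; the conclusion is unaffected.

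The genuine gap, which you identify honestly, is the case $\nu_s=0$, in particular $\chi(W)=0$, and most starkly actions whose restricted maximal torus acts without fixed points: there, as you observe via localization, all positive-degree tautological classes of $s$ and of $s\circ\psi_k$ vanish, so the entire $\kappa$-class strategy --- which is the only tool the paper develops --- is structurally incapable of distinguishing the two bundles, and some new input (non-tautological classes on $B\Diff(W)$, finer homotopical obstructions to factoring through $\Hom(SU(2),\Diff(W))$, or an analysis of the positive-dimensional isotropy strata) would be required. So your proposal should be framed as a partial result extending Theorem B plus a clear delineation of the remaining difficulty, not as a proof of the conjecture as stated.
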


Theorem B says that this conjecture is true for many manifolds, in particular it holds for $\#^g (S^n \times S^n)$ in case $n \geq 3$ is odd and $g > 1$. It is unclear whether it also holds for such manifolds if $n$ is even.    
    
\begin{bibdiv}
\begin{biblist}

\bib{AM76}{article}{
AUTHOR = {Adams, J. F. }
AUTHOR = {Mahmud, Z.},
     TITLE = {Maps between classifying spaces},
   JOURNAL = {Inv. Math.},
    VOLUME = {35},
      YEAR = {1976},
     PAGES = {1--41},
   MRCLASS = {55F35 (57F99)},
  MRNUMBER = {0423352},
MRREVIEWER = {Oscar Burlet},
       URL = {https://doi.org/10.1007/BF01390132},
}

\bib{BeckerGottlieb}{article}{
    AUTHOR = {Becker, J. C.},
    AUTHOR = {Gottlieb, D. H.},
     TITLE = {The transfer map and fiber bundles},
   JOURNAL = {Topology},
  FJOURNAL = {Topology. An International Journal of Mathematics},
    VOLUME = {14},
      YEAR = {1975},
     PAGES = {1--12},
      ISSN = {0040-9383},
   MRCLASS = {55E50 (55F10)},
  MRNUMBER = {0377873},
MRREVIEWER = {J. P. May},
       URL = {https://doi.org/10.1016/0040-9383(75)90029-4},
}

\bib{Church}{article}{
    AUTHOR = {Bestvina, M.},
    AUTHOR = {Church, T.},
    AUTHOR = {Souto, J.},
     TITLE = {Some groups of mapping classes not realized by
              diffeomorphisms},
   JOURNAL = {Comment. Math. Helv.},
  FJOURNAL = {Commentarii Mathematici Helvetici. A Journal of the Swiss
              Mathematical Society},
    VOLUME = {88},
      YEAR = {2013},
    NUMBER = {1},
     PAGES = {205--220},
      ISSN = {0010-2571},
   MRCLASS = {57M50 (20F65)},
  MRNUMBER = {3008918},
MRREVIEWER = {Susumu Hirose},
       URL = {https://doi.org/10.4171/CMH/283},
}
	
\bib{Borel}{book}{
    AUTHOR = {Borel, A.},
     TITLE = {Seminar on transformation groups},
    SERIES = {With contributions by G. Bredon, E. E. Floyd, D. Montgomery,
              R. Palais. Annals of Mathematics Studies, No. 46},
 PUBLISHER = {Princeton University Press, Princeton, N.J.},
      YEAR = {1960},
     PAGES = {vii+245},
   MRCLASS = {55.00},
  MRNUMBER = {0116341},
MRREVIEWER = {P. A. Smith},
}

\bib{Broecker}{book}{
    AUTHOR = {Br{\"o}cker, T.}
    AUTHOR = {tom Dieck, T.},
     TITLE = {Representations of compact {L}ie groups},
    SERIES = {Graduate Texts in Mathematics},
    VOLUME = {98},
 PUBLISHER = {Springer-Verlag, New York},
      YEAR = {1985},
     PAGES = {x+313},
      ISBN = {0-387-13678-9},
   MRCLASS = {22E45 (57-01)},
  MRNUMBER = {781344},
MRREVIEWER = {R. Schultz},
       URL = {https://doi.org/10.1007/978-3-662-12918-0},
}

\bib{BrumfielMadsen}{article}{
    AUTHOR = {Brumfiel, G.}
    AUTHOR = {Madsen, I.},
     TITLE = {Evaluation of the transfer and the universal surgery classes},
   JOURNAL = {Invent. Math.},
  FJOURNAL = {Inventiones Mathematicae},
    VOLUME = {32},
      YEAR = {1976},
    NUMBER = {2},
     PAGES = {133--169},
      ISSN = {0020-9910},
   MRCLASS = {55E10 (57D10 57D65)},
  MRNUMBER = {0413099},
MRREVIEWER = {R. K. Lashof},
       URL = {https://doi.org/10.1007/BF01389959},
}

\bib{Duistermaat}{book}{
    AUTHOR = {Duistermaat, J. J.}
    AUTHOR = {Kolk, J. A. C.},
     TITLE = {Lie groups},
    SERIES = {Universitext},
 PUBLISHER = {Springer-Verlag, Berlin},
      YEAR = {2000},
     PAGES = {viii+344},
      ISBN = {3-540-15293-8},
   MRCLASS = {22Exx (22-01 22C05 43-01)},
  MRNUMBER = {1738431},
MRREVIEWER = {Aloysius Helminck},
       URL = {https://doi.org/10.1007/978-3-642-56936-4},
}

\bib{DwyerZabrodsky}{article}{
    AUTHOR = {Dwyer, W.}
    AUTHOR = {Zabrodsky, A.},
     TITLE = {Maps between classifying spaces},
 BOOKTITLE = {Algebraic topology, {B}arcelona, 1986},
    SERIES = {Lecture Notes in Math.},
    VOLUME = {1298},
     PAGES = {106--119},
 PUBLISHER = {Springer, Berlin},
      YEAR = {1987},
   MRCLASS = {55R35},
  MRNUMBER = {928826},
MRREVIEWER = {Donald W. Kahn},
       URL = {https://doi.org/10.1007/BFb0083003},
}

\bib{EE67}{article}{
    AUTHOR = {Earle, C. J.}
    AUTHOR = {Eells, J.},
     TITLE = {The diffeomorphism group of a compact {R}iemann surface},
   JOURNAL = {Bull. Amer. Math. Soc.},
  FJOURNAL = {Bulletin of the American Mathematical Society},
    VOLUME = {73},
      YEAR = {1967},
     PAGES = {557--559},
      ISSN = {0002-9904},
   MRCLASS = {57.55 (30.00)},
  MRNUMBER = {0212840},
MRREVIEWER = {L. Keen},
       URL = {https://doi.org/10.1090/S0002-9904-1967-11746-4},
}

\bib{Ilya2}{article}{
    AUTHOR = {Galatius, S.},
    AUTHOR = {Grigoriev, I.},
    AUTHOR = {Randal-Williams,
              O.},
     TITLE = {Tautological rings for high-dimensional manifolds},
   JOURNAL = {Compos. Math.},
  FJOURNAL = {Compositio Mathematica},
    VOLUME = {153},
      YEAR = {2017},
    NUMBER = {4},
     PAGES = {851--866},
      ISSN = {0010-437X},
   MRCLASS = {57R20 (55R35 55R40 57S05)},
  MRNUMBER = {3705243},
MRREVIEWER = {Donald M. Davis},
       URL = {https://doi.org/10.1112/S0010437X16008332},
}

\bib{Itzkowitz}{article}{
    AUTHOR = {Itzkowitz, G.},
    AUTHOR = {Rothman, S.},
    AUTHOR = {Strassberg, H.},
     TITLE = {A note on the real representations of {${SU}(2,{C})$}},
   JOURNAL = {J. Pure Appl. Algebra},
  FJOURNAL = {Journal of Pure and Applied Algebra},
    VOLUME = {69},
      YEAR = {1991},
    NUMBER = {3},
     PAGES = {285--294},
      ISSN = {0022-4049},
   MRCLASS = {22E46 (20G05)},
  MRNUMBER = {1090745},
MRREVIEWER = {Dao Ji Meng},
       URL = {https://doi.org/10.1016/0022-4049(91)90023-U},
}

\bib{Bob2}{article}{
    AUTHOR = {Jackowski, S.},
    AUTHOR = {McClure, J.},
    AUTHOR = {Oliver, B.},
     TITLE = {Maps between classifying spaces revisited},
 BOOKTITLE = {The \v Cech centennial ({B}oston, {MA}, 1993)},
    SERIES = {Contemp. Math.},
    VOLUME = {181},
     PAGES = {263--298},
 PUBLISHER = {Amer. Math. Soc., Providence, RI},
      YEAR = {1995},
   MRCLASS = {55S37 (55P60 55R35)},
  MRNUMBER = {1320996},
MRREVIEWER = {Jesper M. M{\o}ller},
       URL = {https://doi.org/10.1090/conm/181/02038},
}

\bib{Miller}{article}{
    AUTHOR = {Miller, E. Y.},
     TITLE = {The homology of the mapping class group},
   JOURNAL = {J. Differential Geom.},
  FJOURNAL = {Journal of Differential Geometry},
    VOLUME = {24},
      YEAR = {1986},
    NUMBER = {1},
     PAGES = {1--14},
      ISSN = {0022-040X},
   MRCLASS = {32G15 (57N05)},
  MRNUMBER = {857372},
MRREVIEWER = {Ronnie Lee},
       URL = {http://projecteuclid.org.stanford.idm.oclc.org/euclid.jdg/1214440254},
}

\bib{Mislin}{article}{
    AUTHOR = {Mislin, G.},
     TITLE = {The homotopy classification of self-maps of infinite
              quaternionic projective space},
   JOURNAL = {Quart. J. Math. Oxford Ser. (2)},
  FJOURNAL = {The Quarterly Journal of Mathematics. Oxford. Second Series},
    VOLUME = {38},
      YEAR = {1987},
    NUMBER = {150},
     PAGES = {245--257},
      ISSN = {0033-5606},
   MRCLASS = {55Q99},
  MRNUMBER = {891619},
       URL = {https://doi.org/10.1093/qmath/38.2.245},
}

\bib{Morita}{article}{
    AUTHOR = {Morita, S.},
     TITLE = {Generators for the tautological algebra of the moduli space of
              curves},
   JOURNAL = {Topology},
  FJOURNAL = {Topology. An International Journal of Mathematics},
    VOLUME = {42},
      YEAR = {2003},
    NUMBER = {4},
     PAGES = {787--819},
      ISSN = {0040-9383},
   MRCLASS = {14H10 (55R40 57M50 57R20)},
  MRNUMBER = {1958529},
MRREVIEWER = {Gilberto Bini},
       URL = {https://doi.org/10.1016/S0040-9383(02)00082-4},
}

\bib{Mumford}{article}{
    AUTHOR = {Mumford, D.},
     TITLE = {Towards an enumerative geometry of the moduli space of curves},
 BOOKTITLE = {Arithmetic and geometry, {V}ol. {II}},
    SERIES = {Progr. Math.},
    VOLUME = {36},
     PAGES = {271--328},
 PUBLISHER = {Birkh\"auser Boston, Boston, MA},
      YEAR = {1983},
   MRCLASS = {14H10 (14C15)},
  MRNUMBER = {717614},
MRREVIEWER = {Werner Kleinert},
}

\bib{Notbohm}{article}{
    AUTHOR = {Notbohm, D.},
     TITLE = {Maps between classifying spaces and applications},
   JOURNAL = {J. Pure Appl. Algebra},
  FJOURNAL = {Journal of Pure and Applied Algebra},
    VOLUME = {89},
      YEAR = {1993},
    NUMBER = {3},
     PAGES = {273--294},
      ISSN = {0022-4049},
   MRCLASS = {55R35},
  MRNUMBER = {1242722},
       URL = {https://doi.org/10.1016/0022-4049(93)90057-Z},
}

\bib{Oscarnew}{article}{
    AUTHOR = {Randal-Williams, O.},
     TITLE = {Some phenomena in tautological rings of manifolds},
   JOURNAL = {Selecta Math. (N.S.)},
  FJOURNAL = {Selecta Mathematica. New Series},
    VOLUME = {24},
      YEAR = {2018},
    NUMBER = {4},
     PAGES = {3835--3873},
      ISSN = {1022-1824},
   MRCLASS = {57S25 (55R10 55R40 57R20 57R22)},
  MRNUMBER = {3848035},
       URL = {https://doi-org.stanford.idm.oclc.org/10.1007/s00029-018-0417-z},
}

\bib{Smale}{article}{
    AUTHOR = {Smale, S.},
     TITLE = {Diffeomorphisms of the {$2$}-sphere},
   JOURNAL = {Proc. Amer. Math. Soc.},
  FJOURNAL = {Proceedings of the American Mathematical Society},
    VOLUME = {10},
      YEAR = {1959},
     PAGES = {621--626},
      ISSN = {0002-9939},
   MRCLASS = {57.00},
  MRNUMBER = {0112149},
MRREVIEWER = {G. T. Whyburn},
       URL = {https://doi.org/10.2307/2033664},
}

\bib{Sullivan}{book}{
    AUTHOR = {Sullivan, D. P.},
     TITLE = {Geometric topology: localization, periodicity and {G}alois
              symmetry},
    SERIES = {$K$-Monographs in Mathematics},
    VOLUME = {8},
      NOTE = {The 1970 MIT notes,
              Edited and with a preface by Andrew Ranicki},
 PUBLISHER = {Springer, Dordrecht},
      YEAR = {2005},
     PAGES = {xiv+283},
      MRCLASS = {55-02 (55P60 55R25 55R40 57N16)},
  MRNUMBER = {2162361},
MRREVIEWER = {John McCleary},
}

\end{biblist}
\end{bibdiv}
\end{document}